\title{Representation of solutions of the Gauss hypergeometric equation by the multiple polylogarithms, functional relations of the multiple polylogarithms and relations of the multiple zeta values}
\author{Shu Oi}
\DeclareSymbolFont{cyss}{OT2}{wncyss}{m}{n}
\DeclareSymbolFont{cyr}{OT2}{wncyr}{m}{n}
\DeclareMathSymbol{\sh}{\mathbin}{cyss}{`x}
\newcommand{\Li}{\operatorname{Li}}
\newcommand{\vect}[2]{{\begin{pmatrix}{#1}\\{#2}\end{pmatrix}}}
\newcommand{\reg}{\operatorname{reg}}
\newcommand{\dist}{\operatorname{dist}}
\newcommand{\length}{\operatorname{length}}
\newcommand{\C}{{\mathbf C}}
\newcommand{\R}{{\mathbf R}}
\newcommand{\Z}{{\mathbf Z}}
\newcommand{\bP}{{\mathbf P}}
\newcommand{\bzeta}{\text{\boldmath $\zeta$}}
\newcommand{\fH}{{\mathfrak H}}
\newcommand{\fh}{{\mathfrak h}}
\newcommand{\fU}{{\mathfrak U}}
\newcommand{\fI}{{\mathfrak I}}
\newcommand{\ds}{\displaystyle}
\newtheorem{thm}{Theorem}
\newtheorem{cor}[thm]{Corollary}
\newtheorem{prop}[thm]{Proposition}
\newtheorem{lem}[thm]{Lemma}
\begin{document}

\allowdisplaybreaks

\maketitle

\begin{abstract}
In this article, we express solutions of the Gauss hypergeometric equation as a series of the multiple polylogarithms by using iterated integral. This representation is the most simple case of a semisimple representation of solutions of the formal KZ equation. Moreover, combining this representation with the connection relations of solutions of the Gauss hypergeometric equation, we obtain various relations of the multiple polylogarithms of one variable and the multiple zeta values.
\end{abstract}

\section{Introduction} \label{sec:Introduction}

In this decade, algebraic theory of the formal Knizhnik-Zamolodchikov (KZ, for short) equation and the multiple polylogarithms has been explored with development of study on the multiple zeta values \cite{D}\cite{Go}\cite{MPH}\cite{OU}. However, application of the theory to specific differential equations does not seem to be studied so far. In this article, we consider the Gauss hypergeometric equation, which is thought to be the most fundamental example of a semisimple representation of the formal KZ equation, and obtain an analytic iterated integral expression of solutions to the Gauss hypergeometric equation. As a result, we can show that the generating function of the multiple polylogarithms of one variable of fixed weight, depth and height, which firstly appeares in the work of Ohno-Zagier \cite{OZ}, is naturally got as iterated integral solutions.

Solutions to the Gauss hypergeometric equation have integral expression of the Euler type or the Barnes type and thereby the connection formulas of the solutions are completely determined. Combining these connection formulas with the above results, we have various functional equations of the multiple polylogarithms of one variable. The category of the functional equations obtained like this involves Euler's inversion formula for polylogarithms and other interested examples.

\paragraph{}

We achieve two main purposes in this article. Firstly, we express solutions to the hypergeometric equation as a series of the multiple polylogarithms of one variable. This series involves the parameters of the hypergeometric equation. For the purpose, we regard the hypergeometric equation as a representation of the formal KZ equation and compute a representation of the fundamental solution to the formal KZ equation which satisfies certain asymptotic behavior at $z=0,1$. Furthermore we show that this series is absolutely convergent on any compact subset of the universal covering space of $\bP^1-\{0,1,\infty\}$.

Secondly, applying to this representation to the connection formula of solutions to the hypergeometric equation, we obtain functional relations of the multiple polylogarithms of one variable. Making specialization of $z=1$ therein, we get diverse relations of the multiple zeta values.

\paragraph{}

The contents of this article is the first step of an investigation of the formal KZ equation and its representations. In the future, we will consider higher dimensional representations and representations of generalized formal KZ equations of many-variables and derive diverse relations of multiple polylogarithms of many variables, in this context.

\paragraph{}

This article is organized as follows: In the remains of section 1, we preliminary recall the fundamental properties and known results on the shuffle algebra, the multiple polylogarithms, the formal KZ equation and the Gauss hypergeometric equation. In section 2, we consider the analytic continuation of the multiple polylogarithms of one variable to the universal covering space of $\bP^1-\{0,1,\infty\}$ as an analytic function.

In section 3, we give the representation of the solution to the Gauss hypergeometric equation which is regular at $z=0$ by the multiple polylogarithms of one variable. This is one of the main results in this article.

In section 4, we derive various functional relations among the multiple polylogarithms of one variable by virtue of comparing the result of section 3 and the regular part of the connection formulas of the solutions of the Gauss hypergeometric equation. In section 5, we consider irregular parts of the connection formulas and obtain some interesting functional relations of the multiple polylogarithms of one variable and relations of the multiple zeta values. These relations are also the main results in this article.

\subsection{The shuffle algebra and the multiple polylogarithms of one variable} \label{subsec:Intro:MPL}

The multiple polylogarithms of one variable are many-valued analytic functions on $\bP^1-\{0,1,\infty\}$ and play the essential role in study on the multiple zeta values or the formal KZ equation. We briefly review their algebraic aspects according to \cite{OU}.

\paragraph{The shuffle algebra}

Let $\fh=\C\langle x,y\rangle$ be a non-commutative polynomial algebra in letters $x,y$ over $\C$ and let $\fh^1=\C+\fh y$ and $\fh^0=\C+x \fh y$ be the subalgebras of $\fh$.

Following \cite{R}, we introduce the shuffle product $\sh$ on $\fh$ by an inductive way such as
\begin{align}
&1 \sh w = w \sh 1 = w, \\
&a_1 w_1 \sh a_2 w_2 = a_1(w_1 \sh a_2w_2) + a_2(a_1 w_1 \sh w_2),
\end{align}
where the notations $a_1$ and $a_2$ stand for the letter $x$ or $y$, and the notations $w_1$ and $w_2$ words of $\fh$. Then the shuffle algebra $(\fh, \sh)$ is a commutative and associative algebra over $\C$ (\cite{R}), and subspaces $\fh^1$ and $\fh^0$ are shuffle subalgebras of $\fh$. We note that $\fh$ is regarded as a polynomial algebra of $x$ over $\fh^1$, and furthermore, a polynomial algebra of $x,y$ over $\fh^0$. Namely,

\begin{align}
\fh &= \bigoplus_{n=0}^{\infty}\fh^1 \sh x^{\sh n} (= \fh^1[x]) \label{regdef}\\
&= \bigoplus_{m,n=0}^{\infty}\fh^0 \sh x^{\sh m} \sh y^{\sh n} (= \fh^0[x,y]) \label{regdef0}.
\end{align}
\noindent
By virtue of this isomorphism, one can define the regularization map $\reg^i(i=0,1): \fh \to \fh^i$ as follows;
\begin{align}
\reg^1(w) &= \text{the constant term of $w$ in the decomposition \eqref{regdef}}, \\
\reg^0(w) &= \text{the constant term of $w$ in the decomposition \eqref{regdef0}}.
\end{align}
The regularization map $\reg^1$ satisfies the following properties;
\begin{align}
w x^n &= \sum_{j=0}^n \reg^1(w x^{n-j}) \sh x^j \quad \text{for } w \in \fh^{1}, \label{eq:reg1}\\
\reg^1(w yx^n)&=(-1)^n(w \sh x^n)y \quad \text{for } n \ge 0, w \in \fh. \label{eq:reg2}
\end{align}

\paragraph{The multiple polylogarithms of one variable}

For the word \\$w = x^{k_1-1}y \cdots x^{k_r-1}y$ in $\fh^1$, the multiple polylogarithm of one variable (MPL for short) $\Li(w;z)$ is defined by 
\begin{align}
\Li(w;z) &= \Li_{k_1,\ldots,k_r}(z) := \sum_{m_1>m_2>\cdots>m_n>0}\frac{z^{m_1}}{m_1^{k_1}\cdots m_n^{k_n}},\label{MPLdef} \\
\Li(1;z)&:=1. \notag
\end{align}
The power series in the right hand side is absolutely convergent for $|z|<1$. If the word $w = x^{k_1-1}y \cdots x^{k_r-1}y$ belongs to $\fh^0$ (namely $k_1>1$), the corresponding MPL converges at $z=1$ and gives the multiple zeta value (MZV for short);
\begin{equation}
\zeta(w)=\zeta(k_1,\ldots,k_r) := \lim_{z \to 1-0}\Li(w;z) = \sum_{m_1>m_2>\cdots>m_n>0}\frac{1}{m_1^{k_1}\cdots m_n^{k_n}}.
\end{equation}

We extend MPL to $\fh$ by putting for $w \in \fh^1$,
\begin{equation}
\Li(wx^n;z) = \sum_{j=0}^n \Li(\reg^1(w x^{n-j});z)\frac{\log^j z}{j!}.
\end{equation}
Then $\Li(\bullet;z)$ is a $\sh$-homomorphism from $\fh$ to $\C$, namely one has $\Li(w \sh w';z)=\Li(w;z)\Li(w';z)$ for $w,w' \in \fh$. Furthermore the extended MPL satisfies the following differential recursive relations(\cite{O});
\begin{align}
\frac{d \Li(xw;z)}{dz} &= \frac{1}{z}\Li(w;z), \label{MPL_diff_x}\\
\frac{d \Li(yw;z)}{dz} &= \frac{1}{1-z}\Li(w;z). \label{MPL_diff_y}
\end{align}

By virtue of these differential relation, one can check that MPL has an iterated integral representation as follows;
\begin{multline}
\Li(x^{k_1-1}y \cdots x^{k_r-1}y;z) = \int_0^z \underbrace{\frac{dt}{t} \circ \cdots \circ \frac{dt}{t}}_{k_1-1 \text{ times}} \circ \frac{dt}{1-t} \circ \underbrace{\frac{dt}{t} \circ \cdots \circ \frac{dt}{t}}_{k_2-1 \text{ times}} \circ \frac{dt}{1-t} \\
\circ \cdots \circ \underbrace{\frac{dt}{t} \circ \cdots \circ \frac{dt}{t}}_{k_r-1 \text{ times}} \circ \frac{dt}{1-t}. \label{rep_ite_MPL}
\end{multline}
\noindent where $\ds \int_0^z \omega_1(t) \circ \omega_2(t) \circ \cdots \circ \omega_r(t)$ (each $\omega_i(t)$ is a $1-form$ of $t$) stands for an iterated integral $\ds \int_0^z \omega_1(t_1) \int_0^{t_1} \omega_2(t_2) \cdots \int_0^{t_{r-1}}\omega_r(t_r)$. The representation yields an analytic continuation of MPL on $\bP^1-\{0,1,\infty\}$ as a many-valued analytic function.

\paragraph{The weight, depth, height of words}

For any word $w$ in $\fh$, we define the weight $|w|$, the depth $d(w)$ and the height $h(w)$ of $w$ by the following;
\begin{align}
|w| &:= \text{the number of letters in } w,\\
d(w) &:= \text{the number of $y$ which appears in $w$},\\
h(w) &:= \text{(the number of $yx$ which appears in $w$)} + 1.
\end{align}

Denote by $g_i(k,n,s) \; (i=0,1)$ the sum of all words in $\fh^i$ with fixed weight $k$, depth $n$ and height $s$; namely
\begin{equation}
g_i(k,n,s) = \sum_{\substack{w \in \fh^i\\|w|=k,\; d(w)=n\\ h(w)=s}} w. \label{def_g0}\\
\end{equation}
Set $g_0(k,n,s;z)=0$ if $k<n+s$, $n<s$ or $k,n,s \in \Z_{\le 0}$, and $g_1(k,n,s;z)=0$ if $k<n+s-1$, $n<s$ or $k,n,s \in \Z_{\le 0}$. We note that, if the word $w=x^{k_1-1}y\cdots x^{k_r-1}y$ belongs to $\fh^0$, $h(w)=\#\{i|k_i \ge 2\}$. This is the original definition of the height given in \cite{OZ}. Hence one obtains the expression
\begin{equation}
g_0(k,n,s) = \sum_{\substack{k_1+\cdots+k_n=k\\k_1 \ge 2,\; k_2,\ldots,k_n \ge 1\\ \#\{i|k_i \ge 2\}=s}}x^{k_1-1}y x^{k_2-1}y \cdots x^{k_n-1}y.
\end{equation}

Following \cite{OZ}, we denote by $G_i(k,n,s;z)$ the sum of the MPLs with respect to $g_i(k,n,s)$; 
\begin{equation}
G_i(k,n,s;z) := \Li(g_i(k,n,s);z) = \sum_{\substack{w \in \fh^i\\|w|=k,\; d(w)=n\\ h(w)=s}} \Li(w;z).
\end{equation}
Especially, we get the following formula;
\begin{equation}
G_0(k,n,s;z) = \sum_{\substack{k_1+\cdots+k_n=k\\k_1 \ge 2,\; k_2,\ldots,k_n \ge 1\\ \#\{i|k_i \ge 2\}=s}}\Li_{k_1,\ldots,k_n}(z).
\end{equation}
Now the equation \eqref{MPL_diff_x} implies the following differential relation;
\begin{equation}
z\frac{d}{dz}G_0(k,n,s;z) = G_1(k-1,n,s;z).
\end{equation}

\subsection{The formal KZ equation} \label{subsec:Intro:formalKZ}
The theory of the formal KZ equation was established by Drinfel'd (\cite{D}). Here, as well as the previous subsection, we explain the relations between the formal KZ equation and the multiple polylogarithms of one variable to \cite{OU}.

\paragraph{The formal KZ equation}

Let $\fH = \C\langle \langle X,Y \rangle \rangle$ be an algebra of non-commu\-tative formal power series over $\C$ in letters $X,Y$. Then $\fh$ and $\fH$ have a dual Hopf algebra structures to each other. Let $H_0(z)$ be a $\fH$-valued analytic function defined by
\begin{equation}
H_0(z) := \sum_{w} \Li(w;z)W, \label{KZsol}
\end{equation}
where the summation in $w$ runs over the set of words in $\fh$ and $W$ stands for the capitalization of $w$, that is, the word in $\fH$ corresponding to $w$. The function $H_0(z)$ is the unique solution to the the formal KZ equation;
\begin{equation}
\frac{dG}{dz} = \left(\frac{X}{z}+\frac{Y}{1-z}\right)G, \label{KZeq}
\end{equation}
which satisfies the asymptotic property: $H_0(z) z^{-X} \to 1 \;\; (z \to 0)$. Furthermore one can write the inverse of $H_0(z)$ by
\begin{equation}
H_0(z)^{-1} = \sum_{w} \Li(S(w);z)W,
\end{equation}
where $S$ is the anti-automorphism on $\fh$ defined by $S(x)=-x$ and $S(y)=-y$. We note that the anti-automorphism $S$ is the antipode of $\fh$ as a Hopf algebra.

\paragraph{Representations of the formal KZ equation}

Let $\fH' = \C\langle X,Y \rangle$ be an algebra of non-commutative polynomial over $\C$ in letters $X,Y$ (that is, $\fH$ is a completion of $\fH'$). For a given representation $\rho$ of $\fH'$, $\rho:\fH' \to M(n,\C)$, the equation
\begin{equation}
\frac{dG}{dz} = \left(\frac{\rho(X)}{z}+\frac{\rho(Y)}{1-z}\right)G \label{KZeq_rep}
\end{equation}
is called the representation of the formal KZ equation by $\rho$ and the formal sum
\begin{equation}
\rho(H_0(z)) := \sum_{w} \Li(w;z)\rho(W) \label{KZsol_rep}
\end{equation}
is called the representation of the fundamental solution $H_0(z)$. In general $\rho(H_0(z))$ is not a $M(n,\C)$-valued analytic function, however, if it is analytic (that is, the formal sum \eqref{KZsol_rep} converges absolutely), the function \eqref{KZsol_rep} gives a solution to the equation \eqref{KZeq_rep}. Then the function \eqref{KZsol_rep} can be viewed as a solution to the equation \eqref{KZeq_rep} obtained by successive integral over the segment $[0,z]$.

\subsection{The Gauss hypergeometric equation and its solutions} \label{subsec:Intro:HGE}

Let $\alpha,\beta$ and $\gamma$ be a complex parameters and $\rho_0: \fH' \to M(2,\C)$ be a representation defined by
\begin{equation}
\rho_0(X)=\begin{pmatrix}0&\beta\\0&1-\gamma\end{pmatrix},\;\; \rho_0(Y)=\begin{pmatrix}0&0\\\alpha&\alpha+\beta+1-\gamma\end{pmatrix}. \label{XYdef}
\end{equation}
Then the representation of the formal KZ equation by $\rho_0$ is the Gauss hypergeometric equation;
\begin{equation}
z(1-z)\frac{d^2w}{dz^2} + (\gamma - (\alpha+\beta+1)z) \frac{dw}{dz} - \alpha \beta w = 0. \label{HGeq}
\end{equation}
Indeed one can rewrite the equation to the system;
\begin{equation}
\frac{d}{dz}\vect{v_1}{v_2} = \left(\frac{1}{z}\begin{pmatrix}0&\beta\\0&1-\gamma\end{pmatrix}+\frac{1}{1-z}\begin{pmatrix}0&0\\\alpha&\alpha+\beta+1-\gamma\end{pmatrix}\right)\vect{v_1}{v_2}, \label{HGeq2}
\end{equation}
where $v_1=w$ and $v_2=\frac{1}{\beta}z\frac{dw}{dz}$.

The Gauss hypergeometric equation is a Fuchsian equation of the second order with three regular singular points $0,1$ and $\infty$ in $\bP^1$.

In what follows, we assume that the parameters $\alpha,\beta,\gamma$ and $\gamma-\alpha-\beta$ are not integers. Under this assumption, the fundamental solution matrices $\Phi_i$ of the equation \eqref{HGeq2} in the neighborhood of $z=i \; (i=0,1,\infty)$ are given by the following (\cite{WW}).
\begin{equation}
\Phi_i=\begin{pmatrix}\varphi^{(i)}_0&\varphi^{(i)}_1\\\frac{1}{\beta}z\frac{d}{dz}\varphi^{(i)}_0&\frac{1}{\beta}z\frac{d}{dz}\varphi^{(i)}_1\end{pmatrix}, \label{def_Phi}
\end{equation}
where the functions $\varphi^{(i)}_j(z)\;\; (i=0,1,\infty,\; j=0,1)$ stand for the solutions in the neighborhood of $z=i$ defined by
\begin{align}
\varphi^{(0)}_0(z)&= F(\alpha,\beta,\gamma;z) = \sum_{n=0}^{\infty} \frac{(\alpha)_n(\beta)_n}{(\gamma)_n n!} z^n, \label{HGeq_sol_0_0}\\
\varphi^{(0)}_1(z)&=z^{1-\gamma}F(\alpha+1-\gamma,\beta+1-\gamma,2-\gamma;z), \label{HGeq_sol_0_1-gamma}\\
\varphi^{(1)}_0(z)&=F(\alpha,\beta,\alpha+\beta+1-\gamma;1-z), \label{HGeq_sol_1_0}\\
\varphi^{(1)}_1(z)&=(1-z)^{\gamma-\alpha-\beta}F(\gamma-\alpha,\gamma-\beta,\gamma-\alpha-\beta+1;1-z), \label{HGeq_sol_1_gamma-alpha-beta}\\
\varphi^{(\infty)}_0(z)&=z^{-\alpha}F(\alpha,\alpha+1-\gamma,\alpha-\beta+1;1/z), \label{HGeq_sol_infty_alpha}\\
\varphi^{(\infty)}_1(z)&=z^{-\beta}F(\beta,\beta+1-\gamma,\beta-\alpha+1;1/z), \label{HGeq_sol_infty_beta}
\end{align}
where we define the branch of these complex power by the principal values. One can continue these solutions analytically to $\bP^1-\{0,1,\infty\}$ as many-valued function. The connection matrices of these fundamental solution matrices are given by the following formula via the Euler or Barnes integral expression of hypergeometric function;
\begin{align}
\Phi_1^{-1}\Phi_0 & = C^{01} =\begin{pmatrix} \frac{\ds \Gamma(\gamma)\Gamma(\gamma-\alpha-\beta)}{\ds \Gamma(\gamma-\alpha)\Gamma(\gamma-\beta)} & \frac{\ds \Gamma(2-\gamma)\Gamma(\gamma-\alpha-\beta)}{\ds \Gamma(1-\alpha)\Gamma(1-\beta)} \\ \frac{\ds \Gamma(\gamma)\Gamma(\alpha+\beta-\gamma)}{\ds \Gamma(\alpha)\Gamma(\beta)} & \frac{\ds \Gamma(2-\gamma)\Gamma(\alpha+\beta-\gamma)}{\ds \Gamma(\alpha+1-\gamma)\Gamma(\beta+1-\gamma)} \end{pmatrix}, \label{connection01}\\
\Phi_{\infty}^{-1} \Phi_0 & = C^{0\infty} =\begin{pmatrix} e^{-\pi i \alpha} \frac{\ds \Gamma(\gamma)\Gamma(\beta-\alpha)}{\ds \Gamma(\beta)\Gamma(\gamma-\alpha)} & e^{\pi i (\gamma-\alpha-1)} \frac{\ds \Gamma(2-\gamma)\Gamma(\beta-\alpha)}{\ds \Gamma(\beta+1-\gamma)\Gamma(1-\alpha)} \\ e^{-\pi i \beta} \frac{\ds \Gamma(\gamma)\Gamma(\alpha-\beta)}{\ds \Gamma(\alpha)\Gamma(\gamma-\beta)} & e^{\pi i (\gamma-\beta-1)} \frac{\ds \Gamma(2-\gamma)\Gamma(\alpha-\beta)}{\ds \Gamma(\alpha+1-\gamma)\Gamma(1-\beta)} \end{pmatrix} \label{connection0infty}.
\end{align}

In what follows, the hypergeometric equation (function, series, \ldots) simply means the Gauss hypergeometric equation (function, series, \ldots, respectively).

\section{Analytic properties of the MPLs}\label{sec:Pre:Anal_MPL}

In this section, we discuss analytic continuation of the multiple polylogarithms of one variable to the universal covering space of $\bP^1-\{0,1,\infty\}$ to prove the analytic property of the representation by $\rho_0$ of the solution to the formal KZ equation.

\begin{prop} \label{MPLestimate}
Let $\fU$ be the universal covering space of $\bP^1-\{0,1,\infty\}$ and $K$ be a compact subset of $\fU$. There exists a constant $M_K$ depending only on $K$ such that, for any word $w \in \fh$,
\begin{equation}
|\Li(w;z)|<M_K \quad \forall z \in K. \label{MPLupperbound}
\end{equation}
\end{prop}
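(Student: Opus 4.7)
The plan is to establish the stronger quantitative bound
$$|\Li(w;z)| \le \mu(K)\,\frac{\lambda(K)^{|w|}}{|w|!} \quad \text{for all } w \in \fh,\; z \in K,$$
with constants depending only on $K$. Since the right-hand side is uniformly bounded in $|w|$ by $\mu(K)\,e^{\lambda(K)}$, this yields the proposition with $M_K = \mu(K)\,e^{\lambda(K)}$. This refined factorial decay is also what is required in the subsequent section to check that the representation $\rho_0(H_0(z)) = \sum_w \Li(w;z)\,\rho_0(W)$ converges absolutely, since one has to absorb the exponential growth $\|\rho_0(W)\| \le C^{|w|}$ and the cardinality $2^{|w|}$ of words of given length.

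The first step is to reduce to $v \in \fh^1$. By the decomposition \eqref{regdef} and the identity \eqref{eq:reg1}, any $w \in \fh$ is uniquely of the form $w = w_1 x^n$ with $w_1 \in \fh^1$, and
$$\Li(w;z) = \sum_{j=0}^n \Li\bigl(\reg^1(w_1 x^{n-j});z\bigr)\,\frac{(\log z)^j}{j!}.$$
Continuity of $\log z$ on the compact set $K \subset \fU$ bounds the $\log$-factors uniformly, while a separate combinatorial estimate (following from the recursive definition of $\reg^1$ via the shuffle $w_1 \sh x^j$) controls the $\ell^1$-norm of $\reg^1(w_1 x^{n-j}) \in \fh^1$ as a linear combination of words. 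Hence it suffices to bound $|\Li(v;z)|$ for single words $v \in \fh^1$.

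For $v \in \fh^1$ and $|z| \le 1/2$, the absolutely convergent series \eqref{MPLdef} gives the elementary uniform bound
$$|\Li(v;z)| \le \sum_{m_1 > \cdots > m_r \ge 1} |z|^{m_1} = \frac{|z|^r}{(1-|z|)^r} \le 1.$$
For general $z \in K$, fix a basepoint $z_0 \in \fU$ with $|z_0|=1/2$, and for each $z \in K$ select a piecewise smooth path $\gamma_1$ from $z_0$ to $z$ in $\fU$, varying continuously. By compactness of $K$, these paths have uniformly bounded length $\ell(K)$ and images avoiding fixed neighborhoods of $\{0,1,\infty\}$, where the forms $dt/t$ and $dt/(1-t)$ are bounded by some $C(K)$. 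Chen's composition-of-paths formula then gives
$$\Li(v;z) = \sum_{v = v'v''} \Li(v';z_0)\, I_{\gamma_1}(v''),$$
where the sum runs over all prefix-suffix factorizations and $I_{\gamma_1}$ denotes the iterated integral along $\gamma_1$. The prefix factor $\Li(v';z_0)$ is bounded by the previous paragraph (combined with the reduction to $\fh^1$, since $v'$ need not end in $y$), and the suffix satisfies the standard Chen estimate $|I_{\gamma_1}(v'')| \le (C\ell)^{|v''|}/|v''|!$. The convolution identity $\sum_{n+m=N}\frac{a^n}{n!}\frac{b^m}{m!}=\frac{(a+b)^N}{N!}$ then assembles these into the claimed bound with $\lambda(K) = C(K)\ell(K) + (\text{contribution from } z_0)$.

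The main technical obstacle is that the form $dt/t$ is not integrable at the base point $z=0$, so the naive Chen estimate cannot be applied to the iterated-integral representation \eqref{rep_ite_MPL} along a path starting at $0$. The resolution is to split the path from $0$ to $z$ at the intermediate point $z_0$ with $|z_0|=1/2$: the near-$0$ portion is controlled by the absolutely convergent power series \eqref{MPLdef}, the portion away from $\{0,1,\infty\}$ is handled by the classical iterated-integral estimate for bounded forms, and Chen's composition formula glues the two pieces. A secondary bookkeeping point is the control of the word-norm expansion of $\reg^1$ needed in the reduction from $\fh$ to $\fh^1$.
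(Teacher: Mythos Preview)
Your overall strategy---splitting the integration path at a fixed base point $z_0$ with $|z_0|=1/2$, controlling the near-$0$ portion by the convergent power series \eqref{MPLdef}, the remaining portion by the standard Chen estimate for bounded $1$-forms, gluing via the composition-of-paths formula, and handling words outside $\fh^1$ through the regularization map---is exactly the route the paper takes (which it attributes to Lappo-Danilevsky). So, for the proposition \emph{as stated}, your argument is correct and essentially identical to the paper's.

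The claimed strengthening $|\Li(w;z)| \le \mu(K)\,\lambda(K)^{|w|}/|w|!$, however, is \emph{false}. Take $K=\{1/2\}$ and $w=x^{k-1}y$; then $\Li(w;1/2)=\Li_k(1/2)=\sum_{m\ge 1}2^{-m}m^{-k}>1/2$ for every $k\ge 1$, which rules out any bound with factorial decay in $|w|=k$. The gap is precisely at the convolution step: the series estimate on the base-point factor gives only $|\Li(\,\cdot\,;z_0)|\le 1$, a constant, not $a^{n}/n!$, so the identity $\sum_{n+m=N}\frac{a^n}{n!}\frac{b^m}{m!}=\frac{(a+b)^N}{N!}$ is inapplicable. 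What one actually obtains is $\sum_{j=0}^{|v|} C\cdot (C\ell)^{j}/j! \le C\,e^{C\ell}$, a uniform bound---exactly what the proposition asserts and what the paper proves. Relatedly, your remark that factorial decay is required for the convergence of $\rho_0(H_0(z))$ is a misreading: the paper's Corollary~\ref{rho0H0estimate} uses only the uniform bound $M_K$, the crude count $2^k$ of words of weight $k$, and the smallness hypotheses $|1-\gamma|,|\alpha+1-\gamma|,|\beta+1-\gamma|,|\alpha+\beta+1-\gamma|<\tfrac12$, which make the resulting series geometric. (A minor point: in Chen's formula it is the \emph{suffix} of $v$ that is evaluated at $z_0$, not the prefix; since $v\in\fh^1$ ends in $y$, every nonempty suffix does too, so the concern ``$v'$ need not end in $y$'' is an artifact of having the two factors swapped.)
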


A basic idea to prove this lemma is due to Lappo-Danilevsky. We try to modify the theory on \cite{LD} p.159-163.

Let $\pi:\fU \to \bP^1-\{0,1,\infty\}$ be the canonical projection and the real interval $\fI$ be a simply-connected subset of $\fU$ defined by $\fI=\{z \in \fU\;|\;0<\Re \pi(z)<1, \Im \pi(z)=0, \arg(z)=\arg(1-z)=0\}$. Thus, for all word $w$ in $\fh^1$ and $z \in \fI$, $\Li(w;z)$ has an expansion \eqref{MPLdef} and $\log z$ has an expansion $\log z=-\sum_{n=1}^{\infty}\frac{(1-z)^n}{n}$. We note that, if $z \in \fI$, $\Li(w;z)$ converges to 0 as $z$ tends to 0 and $\log z$ converges to 0 as $z$ tends to 1.

Let $z,p$ be points of $\fU$ and $C_p^z$ be a path on $\fU$ from $p$ to $z$. For any word $w \in \fh$, we define the MPLs with prescribing an initial point and a path by an inductive way such as
\begin{align}
\Li_{p,C_p^z}(xw;z)&=\int_{p,C_p^z}^z \frac{\Li_{p,C_p^z}(w;z)}{z}dz,\\
\Li_{p,C_p^z}(yw;z)&=\int_{p,C_p^z}^z \frac{\Li_{p,C_p^z}(w;z)}{1-z}dz,\\
\Li_{p,C_p^z}(1;z)&=1.
\end{align}

These MPLs satisfy the following properties.
\begin{lem}{(\cite{LD})} \label{lem:Lappo}
\begin{enumerate}
\item 
For any word $w \in \fh$ and points $p,z \in \fU$, the value of $\Li_{p,C_p^z}(w;z)$ does not depend on choice of a path $C_p^z$ on $\fU$.
\item
Let $\delta$ be the distance between $\pi(C_z^p)$ and $\{0,1\}$;
\begin{equation}
\delta = \dist(\pi(C_z^p),\{0,1\}) = \inf_{\substack{z_1 \in \pi(C_z^p)\\ z_2 \in \{0,1\}}}|z_1-z_2|,
\end{equation}
and $\sigma$ be the length of the path $\pi(C_p^z)$. Then we have
\begin{equation}
|\Li_{p,C_p^z}(w;z)| < \frac{1}{|w|!}\left(\frac{\sigma}{\delta}\right)^{|w|}. \label{HLestimate}
\end{equation}

\end{enumerate}
\end{lem}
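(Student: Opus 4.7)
The plan is to prove (i) and (ii) simultaneously by induction on $|w|$; the case $|w|=0$ is trivial (one checks $\Li_{p,C_p^z}(1;z)=1$ and interprets the estimate in (ii) as meaningful for $|w|\ge 1$, since otherwise one has equality rather than strict inequality). For $|w|\ge 1$, write $w=aw'$ with $a\in\{x,y\}$ and set $\omega_x = dt/t$, $\omega_y = dt/(1-t)$. Both are holomorphic $1$-forms on $\bP^1-\{0,1,\infty\}$, and their pullbacks under $\pi$ are holomorphic $1$-forms on $\fU$.

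For part (i), assume inductively that $F(t):=\Li_{p,C_p^t}(w';t)$ is well-defined (path-independent) and holomorphic on $\fU$. Then $F(t)\,\omega_a(t)$ is a holomorphic $1$-form on the Riemann surface $\fU$, and any holomorphic $1$-form on a one-complex-dimensional manifold is automatically closed (for example, $d(g(t)\,dt)=g'(t)\,dt\wedge dt=0$). Because $\fU$ is simply connected, a closed $1$-form has a single-valued primitive, so the line integral $\int_{C_p^z} F\,\omega_a$ depends only on the endpoints $p,z$ and not on the chosen path. This is exactly path-independence for $w$, and the induction for (i) closes.

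For part (ii), part (i) allows one to compute along any convenient parametrization. Let $s\in[0,\sigma]$ be the arc-length parameter of $\pi(C_p^z)$, and write $C_p^{t(s)}$ for the initial sub-arc of length $s$. Since $|\pi(t(s))|\ge\delta$ and $|1-\pi(t(s))|\ge\delta$ along the path, one has $|\omega_a(t(s))|\,ds \le ds/\delta$. Combining this with the inductive estimate $|\Li_{p,C_p^{t(s)}}(w';t(s))|\le \frac{1}{|w'|!}(s/\delta)^{|w'|}$ gives
\begin{equation*}
\bigl|\Li_{p,C_p^z}(w;z)\bigr|
\;\le\; \int_0^{\sigma} \frac{1}{|w'|!}\left(\frac{s}{\delta}\right)^{|w'|} \frac{ds}{\delta}
\;=\; \frac{1}{|w|!}\left(\frac{\sigma}{\delta}\right)^{|w|},
\end{equation*}
using $(|w'|+1)\cdot|w'|! = |w|!$. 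This completes the induction.

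The only genuine obstacle is part (i); the estimate (ii) is a routine triangle-inequality bound once path-independence is known. The conceptual crux for (i) is the dimensional observation that a holomorphic $1$-form on a Riemann surface is automatically closed, which means the only obstruction to single-valuedness of iterated integration is the fundamental group of the base — and this vanishes on the universal cover $\fU$ by construction.
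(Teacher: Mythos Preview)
The paper does not supply its own proof of this lemma; it merely attributes the result to Lappo-Danilevsky \cite{LD}. Your argument is the standard one and is correct: path-independence on $\fU$ follows because holomorphic $1$-forms on a Riemann surface are automatically closed and $\fU$ is simply connected, and the estimate then follows by arc-length induction exactly as you wrote.

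One small remark: your induction actually yields $\le$ rather than the strict $<$ stated in the lemma, and in fact strict inequality can fail in degenerate cases (for instance, take $w=x$ and let $\pi(C_p^z)$ be a circular arc of radius $\delta$ centred at $0$; then $|\int dt/t|=\sigma/\delta$ exactly). This is a harmless imprecision in the statement itself and has no effect on the use made of the lemma in Proposition~\ref{MPLestimate}, where only the non-strict bound is needed.
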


The next lemma, which is also due to \cite{LD}, follows from the coproduct structure of $\fh$ as a Hopf algebra.
\begin{lem}
Let $w = a_1a_2\cdots a_r$ be a word in $\fh^1$, where each $a_i$ denotes the letter $x$ or $y$ ($i=1,\ldots,r,\; a_r=y$), and $C_p^z$ be a path from $p$ to $z$ on $\fU$. Choosing a point $q$ on $C_p^z$, we divide the path $C_p^z$ as $C_p^z=C_p^q+C_q^z$. Then $\Li_{p,C_p^z}(w;z)$ satisfies
\begin{equation}
\Li_{p,C_p^z}(w;z)=\sum_{i=0}^{r} \Li_{q,C_q^z}(a_1\cdots a_i;z)\Li_{p,C_p^q}(a_{i+1}\cdots a_r;q). \label{HLconnect}
\end{equation}
Here we use a convention such as $\Li_{q,C_q^z}(a_1\cdots a_i;z)=1$ for $i=0$ and\\$\Li_{p,C_p^q}(a_{i+1}\cdots a_r;q)=1$ for $i=r$.
\end{lem}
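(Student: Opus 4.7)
The plan is to prove \eqref{HLconnect} by induction on $r = |w|$. The base case $r = 0$ (the empty word) is immediate: both sides reduce to $1$ under the stated conventions.

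For the inductive step, I peel off the leading letter via the recursive definition:
\[
\Li_{p,C_p^z}(a_1 a_2 \cdots a_r; z) = \int_{p, C_p^z}^{z} \omega_{a_1}(z_1)\, \Li_{p, C_p^{z_1}}(a_2 \cdots a_r; z_1)\, dz_1,
\]
where $\omega_x(t) = dt/t$ and $\omega_y(t) = dt/(1-t)$. Splitting $C_p^z = C_p^q + C_q^z$ separates this into an integral over $C_p^q$ and one over $C_q^z$. The former, read directly from the definition, is $\Li_{p, C_p^q}(a_1 a_2 \cdots a_r; q)$, which is exactly the $i = 0$ summand on the right-hand side of \eqref{HLconnect}.

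For the piece over $C_q^z$, the integration variable $z_1$ lies on the second segment, so $C_p^{z_1}$ decomposes as $C_p^q + C_q^{z_1}$. Applying the inductive hypothesis to $a_2 \cdots a_r$ (of length $r - 1$) with splitting point $q$ yields
\[
\Li_{p, C_p^{z_1}}(a_2 \cdots a_r; z_1) = \sum_{j=0}^{r-1} \Li_{q, C_q^{z_1}}(a_2 \cdots a_{j+1}; z_1)\, \Li_{p, C_p^q}(a_{j+2} \cdots a_r; q).
\]
Substituting this expansion and pulling the $z_1$-independent factors $\Li_{p, C_p^q}(a_{j+2} \cdots a_r; q)$ outside the integral, I recognize the remaining inner integral
\[
\int_{q, C_q^z}^{z} \omega_{a_1}(z_1)\, \Li_{q, C_q^{z_1}}(a_2 \cdots a_{j+1}; z_1)\, dz_1 = \Li_{q, C_q^z}(a_1 a_2 \cdots a_{j+1}; z)
\]
from the recursive definition, now based at $q$. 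Re-indexing $i = j + 1$ reproduces the summands $i = 1, \ldots, r$, and together with the $i = 0$ term obtained above this closes the induction.

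No serious obstacle arises: the argument is a direct unwinding of the definitions, and the only care required is consistent bookkeeping of base points and the re-indexing of the inner sum. Path-independence from Lemma~\ref{lem:Lappo}(i) guarantees that every iterated integral involved is a well-defined function on $\fU$, so splitting $C_p^z$ at the interior point $q$ is unambiguous. The identity is essentially the Chen path-composition formula for iterated integrals, transcribed into the word notation used by the paper.
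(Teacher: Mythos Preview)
Your induction is correct and is precisely the standard direct proof of Chen's path-composition formula; the only cosmetic point is that $\omega_{a_1}$ is already a $1$-form, so the extra $dz_1$ is redundant, but the intent is clear. The paper itself does not spell out a proof: it attributes the lemma to \cite{LD} and remarks that it ``follows from the coproduct structure of $\fh$ as a Hopf algebra,'' which is exactly the deconcatenation $\Delta(w)=\sum_{i=0}^{r}(a_1\cdots a_i)\otimes(a_{i+1}\cdots a_r)$ that your induction unwinds term by term---so your argument is the explicit version of the paper's one-line justification.
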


Clearly, for any word $w \in \fh^1$ and $p \in \fI$, the MPL $\Li(w;z)$ defined in \S \ref{sec:Introduction} satisfy
\begin{equation}
\Li(w;z) = \lim_{\substack{\varepsilon \to 0\\ \varepsilon \in \fI}} \Li_{\varepsilon,[\varepsilon,p]+C_p^z}(w;z), \label{MPL_p_to_0}
\end{equation}
where $[\varepsilon,p]$ stands for the path from $\varepsilon$ to $p$ on the interval $\fI$.

One can prove the following lemma by induction on the length of the word $w$.
\begin{lem}
For any $z \in (0,\frac{1}{2}] \subset \R$ and any word $w \in \fh^1$,
\begin{equation}
|\Li(w;z)| \le 1. \label{MPL_series_bound}
\end{equation}
\end{lem}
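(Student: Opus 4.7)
The plan is to strengthen the claim and prove, by induction on the word-length $|w|$, the bound
$$|\Li(w;z)| \le \frac{(-\log(1-z))^{d(w)}}{d(w)!}$$
for all $w \in \fh^1$ and $z \in (0,1/2]$, where $d(w)$ is the depth. Since $-\log(1-z) \le \log 2 < 1$ on this interval and $d(w)! \ge 1$, the right-hand side is at most $1$, yielding \eqref{MPL_series_bound}.

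The base case $|w|=0$ is immediate: $w=1$ and $\Li(1;z)=1$. For the inductive step, every nonempty $w \in \fh^1$ ends in $y$, so write $w = aw'$ with $w' \in \fh^1$; either $a=y$ and $d(w') = d(w)-1$, or $a=x$ (in which case $w'$ is still nonempty and ends in $y$) and $d(w') = d(w) \ge 1$. If $a=y$, then by \eqref{MPL_diff_y} we have $\Li(w;z) = \int_0^z \Li(w';t)\,dt/(1-t)$, and combining the inductive hypothesis with the substitution $u = -\log(1-t)$ gives
$$|\Li(w;z)| \le \int_0^z \frac{(-\log(1-t))^{d(w)-1}}{(d(w)-1)!\,(1-t)}\,dt = \frac{(-\log(1-z))^{d(w)}}{d(w)!}.$$

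The main obstacle is the case $a=x$. By \eqref{MPL_diff_x}, $\Li(w;z) = \int_0^z \Li(w';t)\,dt/t$, and the factor $1/t$ is singular at the origin, although the integral is well-defined because $\Li(w';t) = O(t)$ near $0$ (since $w'$ ends in $y$). The inductive hypothesis bounds $|\Li(w';t)|$ by $(-\log(1-t))^{d(w)}/d(w)! = \Li_{1,1,\ldots,1}(t) = \sum_{m_1 > \cdots > m_{d(w)} > 0} t^{m_1}/(m_1 m_2 \cdots m_{d(w)})$, where the last identity is itself verified by a short induction on $d(w)$ using \eqref{MPL_diff_y}. Integrating this power series termwise against $dt/t$ produces
$$\sum_{m_1 > \cdots > m_{d(w)} > 0} \frac{z^{m_1}}{m_1^2\, m_2 \cdots m_{d(w)}},$$
which, because $m_1 \ge 1$, is dominated termwise by $\sum z^{m_1}/(m_1 m_2 \cdots m_{d(w)}) = (-\log(1-z))^{d(w)}/d(w)!$. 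This closes the induction.

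The only real content is the $a=x$ case; the $1/t$ singularity prevents one from simply pulling a uniform bound on $|\Li(w';t)|$ through the integral, but the trivial pointwise inequality $1/m_1^2 \le 1/m_1$ at the level of power series coefficients is exactly what is needed to recover the same inductive bound.
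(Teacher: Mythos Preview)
Your argument is correct and follows the same route the paper indicates (induction on $|w|$); the paper gives no details beyond that, so your strengthened inductive hypothesis $|\Li(w;z)|\le(-\log(1-z))^{d(w)}/d(w)!$ and the termwise comparison $1/m_1^2\le 1/m_1$ in the $a=x$ case are a clean realization of what is left implicit there.
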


Using these lemmas, we prove the proposition.

\begin{proof}[Proof of Proposition \ref{MPLestimate}]

Let $p(K)$ and $l(K)$ be constants depending only on $K$ such as
\begin{align}
p(K) &= \min\{\dist(\pi(K),\{0,1\}),\; \frac{1}{2}\},\\
\ds l(K)&=\max_{z \in K} \left(\inf_{\substack{\text{$C$ : path from $p(K)$ to $z$ on $\fU$}\\\dist(\pi(C),\{0,1\}) \ge p(K)}} \length(\pi(C))\right). \label{def_l(K)}
\end{align}
In the right hand side of \eqref{def_l(K)}, we identify the number $p(K) \in (0,1)$ with a point in $\fI \subset \fU$.

By virtue of \eqref{HLconnect} and \eqref{MPL_p_to_0}, for any word $w = a_1a_2\cdots a_r \in \fh^1$ we obtain
\begin{equation}
\Li(w;z)=\sum_{i=0}^{r} \Li_{p(K),C_{p(K)}^z}(a_1\cdots a_i;z)\Li(a_{i+1}\cdots a_r;p(K)).
\end{equation}
Consequently, by Lemma \ref{lem:Lappo} and \eqref{MPL_series_bound}, we have
\begin{align}
|\Li(w;z)|&<\sum_{i=0}^{r} |\Li_{p(K),C_{p(K)}^z}(a_1\cdots a_i;z)||\Li(a_{i+1}\cdots a_r;p(K))| \notag \\
&<\sum_{i=0}^{r} \frac{1}{i!}\left(\frac{l(K)}{p(K)}\right)^i<\exp\left(\frac{l(K)}{p(K)}\right).
\end{align}

Furthermore, for any word $w \in \fh^1$,
\begin{align}
|\Li(w x^n;z)| &< \sum_{j=0}^n |\Li(\reg^1(w x^{n-j});z)|\frac{|\log z|^j}{j!} \notag \\
&< \exp\left(\frac{l(K)}{p(K)}\right) \exp\left(\max_{z \in K}|\log z|\right).
\end{align}
Thus the estimation \eqref{MPLupperbound} holds for $\ds M_K=\exp\left(\frac{l(K)}{p(K)} + \max_{z \in K}|\log z|\right)$.

\end{proof}

\section{Representation of the hypergeometric function by the MPLs} \label{sec:show_HGF_by_MPL}

In this section, we give a concrete form of $\rho_0(H_0(z))$ the representation of the solution to the formal KZ equation and expand the hypergeometric function as a series of the the multiple polylogarithms of one variable.

\subsection{Main Result} \label{subsec:show_HGF_by_MPL:MainThorem1}

\begin{thm}\label{thm:MainTheorem1}
Assume that $|1-\gamma|,|\alpha+1-\gamma|,|\beta+1-\gamma|$ and $|\alpha+\beta+1-\gamma|<\frac{1}{2}$. Then we have the expression of $\varphi^{(0)}_0(z)$, the hypergeometric function, as follows;
\begin{multline}
\varphi^{(0)}_0(z) = 1+ \alpha\beta \sum_{\substack{k,n,s > 0\\k\ge n+s\\n\ge s}} G_0(k,n,s;z) \\
\times (1-\gamma)^{k-n-s}(\alpha+\beta+1-\gamma)^{n-s}((\alpha+1-\gamma)(\beta+1-\gamma))^{s-1}.
\end{multline}
\noindent The series in the right hand side converges absolutely and uniformly on any compact subset $K$ of the universal covering space $\fU$.
\end{thm}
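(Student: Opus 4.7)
The plan is to identify $\varphi^{(0)}_0(z)$ with the $(1,1)$-entry of $\rho_0(H_0(z))$, to compute $\rho_0(W)_{11}$ word by word, and to organize the resulting sum by weight, depth and height.

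First, since $\rho_0(X)$ is upper triangular with vanishing $(1,1)$-entry, an explicit diagonalization yields
\[
z^{\rho_0(X)} = \begin{pmatrix}1 & \frac{\beta}{1-\gamma}(z^{1-\gamma}-1)\\ 0 & z^{1-\gamma}\end{pmatrix}.
\]
Reading off the $(1,1)$-entry of the asymptotic identity $\rho_0(H_0(z))\, z^{-\rho_0(X)} \to I$ as $z\to 0$ gives $\rho_0(H_0(z))_{11}\to 1$; since $\rho_0(H_0(z))$ solves the matrix system \eqref{HGeq2}, its $(1,1)$-entry is a solution of the Gauss hypergeometric equation \eqref{HGeq} taking the value $1$ at the origin, which by uniqueness equals $\varphi^{(0)}_0(z)$.

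Next I would compute $\rho_0(W)_{11}$ explicitly. The elementary identities $\rho_0(Y)(a e_1+b e_2) = (\alpha a + (\alpha+\beta+1-\gamma)b)e_2$ and, for $m\ge 1$, $\rho_0(X)^m(a e_1+b e_2) = b(1-\gamma)^{m-1}(\beta e_1 + (1-\gamma)e_2)$ force $\rho_0(W)_{11}=0$ unless $w$ starts with $x$ and ends with $y$, i.e.\ $w\in\fh^0$. For $w = x^{k_1-1}y\cdots x^{k_r-1}y\in\fh^0$ I would process the blocks $X^{k_i-1}Y$ from right to left using those two formulas. The key algebraic identity
\[
\alpha\beta + (1-\gamma)(\alpha+\beta+1-\gamma) = (\alpha+1-\gamma)(\beta+1-\gamma)
\]
governs the application of $Y$ to a state proportional to $\beta e_1+(1-\gamma)e_2$ and accounts for the factor $(\alpha+1-\gamma)(\beta+1-\gamma)$ appearing in the answer. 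Careful bookkeeping, distinguishing the $s$ blocks with $k_i\ge 2$ from the $n-s$ blocks with $k_i=1$, yields
\[
\rho_0(W)_{11}=\alpha\beta\,(1-\gamma)^{k-n-s}(\alpha+\beta+1-\gamma)^{n-s}\bigl((\alpha+1-\gamma)(\beta+1-\gamma)\bigr)^{s-1},
\]
which depends only on the weight $k$, depth $n$ and height $s$ of $w$. Grouping $\sum_w \Li(w;z)\,\rho_0(W)_{11}$ by $(k,n,s)$ then produces the series involving $G_0(k,n,s;z)$ in the statement.

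Finally, for absolute and locally uniform convergence on $\fU$, I would bound $|\Li(w;z)|\le M_K$ on a compact $K\subset\fU$ by Proposition \ref{MPLestimate}, count the words of weight $k$, depth $n$, height $s$ in $\fh^0$ as $\binom{n-1}{s-1}\binom{k-n-1}{s-1}$, and substitute $a=k-n-s$, $b=n-s$, $c=s-1$. Summing the $a$- and $b$-series via $\sum_{a\ge 0}\binom{a+c}{c}x^a=(1-x)^{-c-1}$ reduces the majorant to a geometric series in $c$ with ratio $|(\alpha+1-\gamma)(\beta+1-\gamma)|/\bigl((1-|1-\gamma|)(1-|\alpha+\beta+1-\gamma|)\bigr)$; the hypothesis that $|1-\gamma|$, $|\alpha+1-\gamma|$, $|\beta+1-\gamma|$, $|\alpha+\beta+1-\gamma|$ are all less than $\tfrac12$ makes the numerator strictly less than $\tfrac14$ and the denominator strictly greater than $\tfrac14$, so the series converges. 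The main obstacle I anticipate is the combinatorial bookkeeping in the explicit computation of $\rho_0(W)_{11}$; once the algebraic identity above is spotted, however, the rest reduces to a clean induction on the depth $n$.
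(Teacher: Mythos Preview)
Your approach is essentially the paper's: compute $\rho_0(W)$, group by weight/depth/height, and invoke Proposition~\ref{MPLestimate} for convergence. The paper carries out the computation for the full matrix $\rho_0(W)$ (Lemma~\ref{rho0calc}) and identifies $\Phi_0=\rho_0(H_0(z))\begin{pmatrix}1&1\\0&p/\beta\end{pmatrix}$ by matching matrix asymptotics (Lemma~\ref{lem:Phi0_rho0(H0)}), then reads off the $(1,1)$-entry; its convergence argument uses the crude count $2^k$ for words of weight $k$ rather than your exact binomial count, but both lead to a convergent majorant under the stated hypotheses.

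One point to tighten: your identification ``solution taking the value $1$ at the origin, which by uniqueness equals $\varphi^{(0)}_0$'' is not quite justified by the limit $\rho_0(H_0(z))_{11}\to 1$ alone, since when $\Re(1-\gamma)>0$ the second solution $z^{1-\gamma}F(\dots)$ also tends to $0$ and could contaminate the limit. The clean fix is already implicit in your argument: once you observe $\rho_0(W)_{11}=0$ unless $w\in\fh^0$, the $(1,1)$-entry is a locally uniform limit of functions holomorphic on $|z|<1$, hence itself holomorphic there, and the holomorphic solution with value $1$ at $z=0$ is unique. Alternatively, appeal (as the paper does) to uniqueness of the fundamental matrix with asymptotic $\rho_0(H_0(z))z^{-\rho_0(X)}\to I$, which pins down the entire first column at once.
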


We prove this theorem in the following. We set $p=1-\gamma$ and $q=\alpha+\beta+1-\gamma$.

\subsection{The image of word in $\fH$ by the representation $\rho_0$}\label{subsec:show_HGF_by_MPL:Rep_of_formalKZ_0}

We consider the representation $\rho_0: \fH' \to M(2,\C)$ given by \eqref{XYdef};
\begin{equation}
\rho_0(X)=\begin{pmatrix}0&\beta\\0&p\end{pmatrix},\;\; \rho_0(Y)=\begin{pmatrix}0&0\\\alpha&q\end{pmatrix}.
\end{equation}
The representation of the formal KZ equation \eqref{KZeq} by $\rho_0$
\begin{equation}
\frac{d}{dz}G = \left(\frac{\rho_0(X)}{z}+\frac{\rho_0(Y)}{1-z}\right)G \label{HGeq3}
\end{equation}
is nothing but the hypergeometric equation \eqref{HGeq2} and the representation of the solution $H_0(z)$
\begin{equation}
\rho_0(H_0(z)) = \sum_{w} \Li(w;z)\rho_0(W) \label{rep_H0}
\end{equation}
gives , if it converges absolutely, the fundamental solution matrix which has the asymptotic property $\rho_0(H_0) z^{-\rho_0(X)} \to I \quad (z \to 0)$. In order to compute the series of the right hand side of \eqref{rep_H0} and to show its convergence, we prepare the following lemma.

\begin{lem} \label{rho0calc}
For any word $W$ in $\fH$, we define the weight $|W|$, the depth $d(W)$ and the height $h(W)$ of $W$ in a similar fashion as in $\fh$;
\begin{align*}
|W| &:= \text{the number of letters in } W,\\
d(W) &:= \text{the number of $Y$ which appears in $W$},\\
h(W) &:= \text{(the number of $YX$ which appears in $W$)} + 1.
\end{align*}
Then for any non-empty word $W \in \fH$, the representation $\rho_0(W)$ is given by
\begin{align}
\rho_0(W)&=p^{|W|-d(W)-h(W)} q^{d(W)-h(W)} (\alpha\beta+pq)^{h(W)-1} M, \label{rho0(W)explicit}
\intertext{where}
M&=\begin{cases}
\begin{pmatrix}\alpha\beta&\beta q\\ \alpha p&pq\end{pmatrix}&(\text{if } W \in X \fH Y)\\
\begin{pmatrix}0&0\\ \alpha p&pq\end{pmatrix}&(\text{if } W \in Y \fH Y \text{or\;} W=Y)\\
\begin{pmatrix}0&\beta q\\ 0&pq\end{pmatrix}&(\text{if } W \in X \fH X \text{or\;} W=X)\\
\begin{pmatrix}0&0\\ 0&pq\end{pmatrix}&(\text{if } W \in Y \fH X).
\end{cases}
\end{align}

\end{lem}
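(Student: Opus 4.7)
The plan is to reduce $\rho_0(W)$ to a product of alternating single letters $\rho_0(X)$ and $\rho_0(Y)$, and then exploit the rank-one structure of $\rho_0(X)\rho_0(Y)$ to evaluate the resulting product in closed form. First I would verify by direct matrix multiplication the relations
\[
\rho_0(X)^n = p^{n-1}\rho_0(X),\qquad \rho_0(Y)^n = q^{n-1}\rho_0(Y)\qquad (n\ge 1),
\]
which hold because each of $\rho_0(X)$ and $\rho_0(Y)$ has a zero column, so its square is proportional to itself.

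Next I would compute the two primitive products
\[
\rho_0(X)\rho_0(Y) = \begin{pmatrix}\alpha\beta & \beta q\\ \alpha p & pq\end{pmatrix} =: M_{XY},\qquad \rho_0(Y)\rho_0(X) = (\alpha\beta+pq)\begin{pmatrix}0 & 0\\ 0 & 1\end{pmatrix},
\]
and observe that $M_{XY}$ is a rank-one matrix whose nonzero eigenvalue is its trace $\alpha\beta+pq$, so $M_{XY}^{k} = (\alpha\beta+pq)^{k-1}M_{XY}$ for $k\ge 1$. A short calculation also yields the boundary identities $M_{XY}\rho_0(X) = (\alpha\beta+pq)\rho_0(X)$ and $\rho_0(Y)M_{XY} = (\alpha\beta+pq)\rho_0(Y)$.

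With these tools in hand, I would write an arbitrary non-empty word in block form $W = X^{a_1}Y^{b_1}X^{a_2}Y^{b_2}\cdots$ (possibly starting with $Y$ or ending with $X$), and apply the relations of the first step to pull out a scalar $p^{\sum(a_i-1)}q^{\sum(b_i-1)}$, leaving $\rho_0(W)$ expressed as a product of alternating single $\rho_0(X)$'s and $\rho_0(Y)$'s. Splitting according to the four possibilities for the first and last letters of $W$, that alternating product is a power of $M_{XY}$ multiplied by at most one additional $\rho_0(X)$ or $\rho_0(Y)$, which by the identities above collapses to a power of $\alpha\beta+pq$ times one of the four matrices $M$ appearing in the lemma.

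The main obstacle will be the case-by-case bookkeeping of exponents. In the three cases where $W\in X\fH Y$, $W\in X\fH X\cup\{X\}$, or $W\in Y\fH Y\cup\{Y\}$, the height $h(W)$ equals the natural count of maximal $X$- or $Y$-blocks and the exponents of $p$, $q$ and $\alpha\beta+pq$ read off directly. In the remaining case $W\in Y\fH X$ one has $h(W) = 1 + (\text{number of }YX\text{ transitions})$, exceeding the block count by one; this apparent mismatch is absorbed by the extra $pq$ present in $M = \begin{pmatrix}0 & 0\\ 0 & pq\end{pmatrix}$, after which the same exponent formula emerges. A final consistency check on the edge cases $W=X,Y,XY,YX$ confirms that the uniform formula covers them as well.
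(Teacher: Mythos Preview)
Your proposal is correct and is precisely the ``straightforward computation'' to which the paper's one-line proof alludes; the paper gives no further detail, and your argument supplies exactly the block decomposition and rank-one collapse one would carry out. One tiny wording issue: in the case $W\in Y\fH X$ the reduced alternating product is $\rho_0(Y)M_{XY}^{s-1}\rho_0(X)$ (two extra factors, not ``at most one''), but you already handle this case separately via $\rho_0(Y)\rho_0(X)=(\alpha\beta+pq)\left(\begin{smallmatrix}0&0\\0&1\end{smallmatrix}\right)$ and the absorbed $pq$, so the argument goes through unchanged.
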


\begin{proof}
This is proved by straightforward computation.
\end{proof}

By putting $\delta=\max(|\alpha\beta|,|\alpha p|, |\beta q|, |pq|, 1)$, we obtain the following corollaries.

\begin{cor} \label{rho0Westimate}
For any word $W \in \fH$, there exists a constant $\delta$ depending only $\alpha,\beta,p,q$ such that
\begin{align}
||\rho_0(W)|| &\le \delta |p|^{|W|-d(W)-h(W)}|q|^{d(W)-h(W)}|\alpha\beta+pq|^{h(W)-1},
\end{align}
where $||A||$ denotes the maximal norm of a matrix $A=(a_{ij})_{1\le i,j \le 2}$, namely $||A||=\max_{i,j}|a_{ij}|$.
\end{cor}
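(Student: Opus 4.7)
The plan is to simply read off the bound from the explicit formula \eqref{rho0(W)explicit} in Lemma \ref{rho0calc}. For a non-empty word $W \in \fH$, that formula factors $\rho_0(W)$ as
\[
\rho_0(W) = p^{|W|-d(W)-h(W)}\, q^{d(W)-h(W)}\, (\alpha\beta + pq)^{h(W)-1}\, M,
\]
where $M$ is one of four constant $2\times 2$ matrices determined by the first and last letters of $W$. Taking maximum norms on both sides and using multiplicativity of absolute values on the scalar prefactor, the problem reduces to bounding $\|M\|$ uniformly in the four cases.

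The second step is to observe that the entries of each of the four possible $M$'s are drawn from $\{0,\,\alpha\beta,\,\beta q,\,\alpha p,\,pq\}$, so in every case
\[
\|M\| \;\le\; \max\bigl(|\alpha\beta|,\,|\alpha p|,\,|\beta q|,\,|pq|\bigr) \;\le\; \delta,
\]
by the very definition of $\delta$. Combining this with the factored identity above yields
\[
\|\rho_0(W)\| \;\le\; \delta\, |p|^{|W|-d(W)-h(W)}\, |q|^{d(W)-h(W)}\, |\alpha\beta + pq|^{h(W)-1},
\]
which is the asserted estimate.

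The remaining step is a sanity check for the empty word, which Lemma \ref{rho0calc} does not cover: here $\rho_0(W) = I$ has maximum norm $1$, and $|W| = d(W) = h(W) = 0$, so the right-hand side becomes $\delta |p|^{0}|q|^{0}|\alpha\beta+pq|^{-1}$, which is $\ge 1$ as soon as $\delta \ge |\alpha\beta + pq|$; the safer reading (and the one presumably intended by including the factor $1$ in the definition of $\delta$) is to interpret the bound for non-empty $W$ only, in which case there is no obstacle at all. There is no real difficulty in this corollary: once Lemma \ref{rho0calc} is in hand the argument is a one-line estimate, with the only mild care being the case analysis on the four possible $M$'s, all of which manifestly satisfy $\|M\|\le\delta$.
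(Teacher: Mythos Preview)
Your proof is correct and follows exactly the paper's approach: the paper gives no separate proof for this corollary at all, merely introducing $\delta=\max(|\alpha\beta|,|\alpha p|,|\beta q|,|pq|,1)$ immediately before the statement and letting the estimate drop out of Lemma~\ref{rho0calc}. One small slip: by the paper's definition the height of the empty word is $h(W)=1$, not $0$; but your instinct that the bound is really intended for non-empty $W$ (as confirmed by how it is used in Corollary~\ref{rho0H0estimate}) is right.
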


\begin{cor} \label{rho0H0estimate}
If $|1-\gamma|,|\alpha+1-\gamma|,|\beta+1-\gamma|,|\alpha+\beta+1-\gamma| < \frac{1}{2}$, the representation $\rho_0(H_0(z))$ converges absolutely and uniformly on any compact subset $K$ of the universal covering $\fU$ of $\bP^1-\{0,1,\infty\}$.
\end{cor}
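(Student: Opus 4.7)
The plan is to construct a convergent scalar majorant for $\sum_{w}|\Li(w;z)|\,\|\rho_0(W)\|$ that is uniform in $z\in K$. By Proposition~\ref{MPLestimate} one has $|\Li(w;z)|\le M_K$ for all words $w$ and all $z\in K$, while Corollary~\ref{rho0Westimate} gives, for every non-empty $w$,
\[
\|\rho_0(W)\|\le \delta\,|p|^{|w|-d(w)-h(w)}\,|q|^{d(w)-h(w)}\,|\alpha\beta+pq|^{h(w)-1}.
\]
Thus convergence of $\rho_0(H_0(z))$ reduces to finiteness of the scalar series
\[
\Sigma:=\sum_{k,n,s}N(k,n,s)\,|p|^{k-n-s}|q|^{n-s}|\alpha\beta+pq|^{s-1},
\]
where $N(k,n,s)$ is the number of non-empty words in $\fh$ of weight $k$, depth $n$, and height $s$.

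To count $N(k,n,s)$ I use the unique decomposition $w=x^{a_0}yx^{a_1}y\cdots yx^{a_n}$ with $a_i\ge 0$, $\sum a_i=k-n$ (the case $n=0$ being $w=x^k$); the height then equals $1+\#\{i\in\{1,\dots,n\}:a_i\ge 1\}$, and an elementary combinatorial count yields $N(k,n,s)=\binom{n}{s-1}\binom{k-n}{s-1}$ on the admissible range $n\ge 0$, $1\le s\le n+1$, $k\ge n+s-1$. Substituting $a=k-n-s+1,\ b=n-s+1,\ c=s-1$—all non-negative because $|w|-d-h\ge -1$, $d-h\ge -1$, $h\ge 1$—produces a bijection $\Z_{\ge 0}^3\to\{(k,n,s)\}$ under which $N(k,n,s)=\binom{a+c}{c}\binom{b+c}{c}$.

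Applying the crude bounds $\binom{a+c}{c}\le 2^{a+c}$ and $\binom{b+c}{c}\le 2^{b+c}$ and factoring yields
\[
\Sigma\le\frac{1}{|p|\,|q|}\Bigl(\sum_{a\ge 0}(2|p|)^a\Bigr)\Bigl(\sum_{b\ge 0}(2|q|)^b\Bigr)\Bigl(\sum_{c\ge 0}(4|\alpha\beta+pq|)^c\Bigr).
\]
The hypotheses $|p|,|q|<\tfrac12$ immediately give $2|p|,2|q|<1$, and for the third factor I invoke the algebraic identity
\[
\alpha\beta+pq=\alpha\beta+(1-\gamma)(\alpha+\beta+1-\gamma)=(\alpha+1-\gamma)(\beta+1-\gamma),
\]
which combined with $|\alpha+1-\gamma|,|\beta+1-\gamma|<\tfrac12$ yields $|\alpha\beta+pq|<\tfrac14$ and hence $4|\alpha\beta+pq|<1$. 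All three geometric series converge, the bound on $\Sigma$ is independent of $z\in K$, and absolute uniform convergence of $\rho_0(H_0(z))$ on $K$ follows.

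The only genuinely non-routine step is spotting the factorization $\alpha\beta+pq=(\alpha+1-\gamma)(\beta+1-\gamma)$: this is what reconciles the four separate smallness hypotheses with the three-parameter $(k,n,s)$ decomposition of words in $\fh$, ensuring that each of the three geometric series has ratio less than $1$. The remaining work—enumerating words by weight/depth/height and bounding binomials by powers of $2$—is routine bookkeeping.
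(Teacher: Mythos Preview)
Your argument is correct and follows essentially the same route as the paper's proof: bound each $|\Li(w;z)|$ by $M_K$ via Proposition~\ref{MPLestimate}, bound $\|\rho_0(W)\|$ via Corollary~\ref{rho0Westimate}, and reduce to a triple geometric series using the factorization $\alpha\beta+pq=(\alpha+1-\gamma)(\beta+1-\gamma)$. The only cosmetic difference is that the paper replaces your exact count $N(k,n,s)=\binom{n}{s-1}\binom{k-n}{s-1}$ by the cruder observation that there are at most $2^k$ words of weight $k$; since your binomial estimate $\binom{a+c}{c}\binom{b+c}{c}\le 2^{a+b+2c}=2^k$ recovers exactly this bound, the two arguments coincide from that point on.
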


\begin{proof}
Let $K$ be a compact subset of $\fU$. By using Proposition \ref{MPLestimate} and the fact that the number of words in $\fh$ with fixed weight $k$ is $2^k$, we can show that there exists a constant $M_K$ depending only on $K$ such that
\begin{equation}
\left|\sum_{\substack{w \in \fh \text{: word},\;\; |w|=k\\d(w)=n,\;\; h(w)=s}}\Li(w;z)\right|<2^k M_K \qquad \forall z \in K.
\end{equation}
By Corollary \ref{rho0Westimate} we have
\begin{align}
||\rho_0&(H_0(z))||=||\sum_{w} \Li(w;z)\rho_0(W)|| \notag \\
&\le 1+\delta \sum_{k,n,s} \sum_{\substack{w\\|w|=k\\d(w)=n\\h(w)=s}}\Li(w;z)|p|^{|W|-d(W)-h(W)}|q|^{d(W)-h(W)}|\alpha\beta+pq|^{h(W)-1} \notag \\
&\le 1+\delta M_K \sum_{k,n,s} 2^k |p|^{k-n-s}|q|^{n-s}|\alpha\beta+pq|^{s-1} \notag \\
&< 1+4 \delta M_K \sum_{\substack{k,n,s >0\\k \ge n+s\\n \ge s}} (2|1-\gamma|)^{k-n-s}(2|\alpha+\beta+1-\gamma|)^{n-s} \notag \\
&\hspace{5cm}\times \big((2|\alpha+1-\gamma|)(2|\beta+1-\gamma|)\big)^{s-1}.
\end{align}
Hence  the series $\rho_0(H_0(z))$ converges absolutely and uniformly on $K$ if $|1-\gamma|,|\alpha+1-\gamma|,|\beta+1-\gamma|,|\alpha+\beta+1-\gamma| < \frac{1}{2}$.
\end{proof}

\subsection{The asymptotic properties of $\rho_0(H_0)$ and $\Phi_0$} \label{subsec:show_HGF_by_MPL:asymptotic}

From the discussion above, it follows that the representation $\rho_0(H_0(z))$ is the fundamental solution matrix of the hypergeometric equation on $\fU$. Hence there exists a linear relation between $\rho_0(H_0(z))$ and $\Phi_0$, which is also a fundamental solution matrix in the neighborhood of $z=0$ defined by \eqref{def_Phi}, as follows.

\begin{lem} \label{lem:Phi0_rho0(H0)}
\begin{equation}
\Phi_0(z)=\rho_0(H_0(z))\begin{pmatrix}1&1\\0&\frac{p}{\beta}\end{pmatrix}. \label{Phi0_rho0(H0)}
\end{equation}
\end{lem}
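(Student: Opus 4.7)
The plan is to exploit the fact that both sides of the asserted identity are fundamental solution matrices of the hypergeometric system \eqref{HGeq2} on the universal cover $\fU$ and therefore differ by a constant invertible matrix on the right. Indeed, the formal KZ equation \eqref{HGeq3} in the representation $\rho_0$ coincides with \eqref{HGeq2}, and by Corollary \ref{rho0H0estimate} the series $\rho_0(H_0(z))$ converges absolutely and uniformly on compact subsets of $\fU$, so term-by-term differentiation shows that it is a holomorphic solution of \eqref{HGeq2}. Its invertibility will drop out of the asymptotic computation below. Hence there exists a unique $C \in M(2,\C)$ with $\Phi_0(z) = \rho_0(H_0(z))\, C$, and the problem reduces to identifying $C$ by comparing leading behaviors at $z=0$.

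For $\rho_0(H_0(z))$, I would use its defining asymptotic $\rho_0(H_0(z))\, z^{-\rho_0(X)} \to I$ as $z\to 0$, which is equivalent to $\rho_0(H_0(z)) \sim z^{\rho_0(X)}$. The matrix $\rho_0(X)=\begin{pmatrix}0&\beta\\0&p\end{pmatrix}$ has eigenvalues $0$ and $p$, with eigenvectors $\bigl(\begin{smallmatrix}1\\0\end{smallmatrix}\bigr)$ and $\bigl(\begin{smallmatrix}\beta\\p\end{smallmatrix}\bigr)$, so diagonalizing (using the principal branch of $\log z$ on $\fI$ and extending to $\fU$) gives
\[
z^{\rho_0(X)} \;=\; \begin{pmatrix} 1 & \tfrac{\beta}{p}(z^{p}-1) \\ 0 & z^{p}\end{pmatrix}.
\]

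For $\Phi_0(z)$ I would plug the known expansions \eqref{HGeq_sol_0_0} and \eqref{HGeq_sol_0_1-gamma} into the definition \eqref{def_Phi}. Since $\varphi^{(0)}_0(z) = 1 + O(z)$ and $\varphi^{(0)}_1(z) = z^{p}(1 + O(z))$, differentiation yields $\tfrac{1}{\beta}z\tfrac{d}{dz}\varphi^{(0)}_0 = O(z)$ and $\tfrac{1}{\beta}z\tfrac{d}{dz}\varphi^{(0)}_1 = \tfrac{p}{\beta}z^{p} + O(z^{p+1})$, so
\[
\Phi_0(z) \;=\; \begin{pmatrix} 1 & z^{p} \\ 0 & \tfrac{p}{\beta}z^{p}\end{pmatrix} + (\text{higher order}).
\]

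Finally I would match the two asymptotics: multiplying the explicit form of $z^{\rho_0(X)}$ on the right by $\bigl(\begin{smallmatrix}1&1\\0&p/\beta\end{smallmatrix}\bigr)$ gives precisely $\bigl(\begin{smallmatrix}1 & z^{p}\\0 & (p/\beta)z^{p}\end{smallmatrix}\bigr)$, which agrees with the leading part of $\Phi_0(z)$. By uniqueness of $C$ this forces $C = \bigl(\begin{smallmatrix}1&1\\0&p/\beta\end{smallmatrix}\bigr)$, establishing \eqref{Phi0_rho0(H0)} and simultaneously verifying invertibility of $\rho_0(H_0(z))$. The only mild point to watch is keeping the branch of $z^{p}$ in the computation of $z^{\rho_0(X)}$ consistent with the branch implicit in \eqref{HGeq_sol_0_1-gamma} on the interval $\fI$ used to define the MPLs; everything else is a routine $2\times 2$ computation.
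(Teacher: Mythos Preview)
Your proposal is correct and follows essentially the same route as the paper: both arguments identify the constant matrix by exploiting the characterizing asymptotic $\rho_0(H_0(z))\,z^{-\rho_0(X)}\to I$ at $z=0$ and comparing with the explicit leading behavior of $\Phi_0(z)$ on a domain where branches are fixed. The paper packages this as the single computation $\Phi_0(z)\bigl(\begin{smallmatrix}1&1\\0&p/\beta\end{smallmatrix}\bigr)^{-1}z^{-\rho_0(X)}\to I$, whereas you compute $z^{\rho_0(X)}$ and the leading part of $\Phi_0$ separately and match them; these are the same calculation.
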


\begin{proof}
Let $D$ be a domain in $\C$ defined by $D=\{z \in \C|\; |z|<1\}-\{\Re z\le 0, \Im z=0\}$ and specify branches of all MPLs, which appears in $\rho_0(H_0(z))$, in $D$ by the expansion \eqref{MPLdef} and a branch of $\log z$ by the principal value (that is, $D$ is a domain in $\fU$ which includes the interval $\fI$ and $\pi(D)$ is simply-connected). It is enough to prove that both sides of \eqref{Phi0_rho0(H0)} have the same asymptotic property as $z$ tends to 0 in $D$.

By virtue of \S \ref{subsec:Intro:formalKZ}, clearly $\rho_0(H_0(z)) z^{-\rho_0(X)} \to I\;(z \to 0)$ holds. On the other hand, because of the formula
\begin{equation}
z^{-\rho_0(X)}=\begin{pmatrix}1&\frac{\beta}{p}(z^{-p}-1)\\0&z^{-p}\end{pmatrix},
\end{equation}
we get
\begin{align}
\Phi_0(z)&\begin{pmatrix}1&1\\0&\frac{p}{\beta}\end{pmatrix}^{-1} z^{-\rho_0(X)} \notag \\
&= \begin{pmatrix}F_{00}(z)&\frac{\beta}{p}(F_{01}(z)-F_{00}(z))\\ \frac{1}{\beta}zF_{00}'(z)& \frac{1}{p}(pF_{01}(z)+zF'_{01}(z)-zF'_{00}(z))\end{pmatrix} \notag \to I \quad (z \to 0),
\end{align}
where $F_{00}$ and $F_{01}$ are introduced through $\varphi^{(0)}_0=F_{00},\; \varphi^{(0)}_1=z^{1-\gamma}F_{01}$. Hence $\Phi_0$ and $\rho_0(H_0(z))$ are the same solution to the hypergeometric equation.
\end{proof}

\subsection{Proof of Theorem \ref{thm:MainTheorem1}} \label{subsec:show_HGF_by_MPL:proof}

Now we prove Theorem \ref{thm:MainTheorem1} by using the preceding lemmas.

\begin{proof}[Proof of Theorem \ref{thm:MainTheorem1}]

From Corollary \ref{rho0H0estimate}, if $|1-\gamma|,|\alpha+1-\gamma|,|\beta+1-\gamma|,|\alpha+\beta+1-\gamma| < \frac{1}{2}$, any matrix element of $\rho_0(H_0(z))$ converges absolutely and uniformly on any compact subset $K$ of $\fU$. Since we see that $\varphi^{(0)}_0(z)=\left(\rho(H_0(z))\begin{pmatrix}1&1\\0&\frac{p}{\beta}\end{pmatrix}\right)_{11}$ satisfies the same property. Here $A_{ij}$ stands for the $(i,j)$ element of a matrix $A$. This is computed as follows;
\begin{align}
\varphi^{(0)}_0(z) &=1+ \alpha\beta \sum_{\substack{k,n,s >0\\k \ge n+s\\n \ge s}} G_0(k,n,s;z) p^{k-n-s}q^{n-s}(\alpha\beta+pq)^{s-1} \notag \\
=1+ &\alpha\beta \sum_{\substack{k,n,s >0\\k \ge n+s\\n \ge s}} G_0(k,n,s;z) \notag \\
&\qquad \quad \times (1-\gamma)^{k-n-s}(\alpha+\beta+1-\gamma)^{n-s}((\alpha+1-\gamma)(\beta+1-\gamma))^{s-1}. \label{Rep_phi_00}
\end{align}

In the discussion above, we have assumed that $1-\gamma, \alpha, \beta \neq 0$. However, the formula \eqref{Rep_phi_00} makes sense even if $1-\gamma \to 0, \alpha \to 0$ and $\beta \to 0$.

\end{proof}

\begin{cor}\label{cor:MainTheorem1}
The solution $\varphi^{(0)}_1(z)$ to the hypergeometric equation, which has the exponent $1-\gamma$ at $z=0$, is given as follows;
\begin{multline}
\varphi^{(0)}_1(z) = z^{1-\gamma}\bigg(1+(\alpha+1-\gamma)(\beta+1-\gamma)\sum_{k,n,s} G_0(k,n,s;z)\\
\phantom{z^{1-\gamma}\bigg(1+(\alpha+1-\gamma)}\times (\gamma-1)^{k-n-s}(\alpha+\beta+1-\gamma)^{n-s}(\alpha\beta)^{s-1} \bigg).
\end{multline}
\end{cor}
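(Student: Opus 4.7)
The plan is to reduce the corollary directly to Theorem \ref{thm:MainTheorem1} via the classical symmetry of the Gauss hypergeometric equation at $z=0$ that exchanges its two Frobenius solutions. Writing $\varphi_0^{(0)}(z;\alpha,\beta,\gamma)$ temporarily to emphasize parameter dependence, the definition \eqref{HGeq_sol_0_1-gamma} immediately gives
\[
\varphi_1^{(0)}(z;\alpha,\beta,\gamma) \;=\; z^{1-\gamma}\,\varphi_0^{(0)}(z;\alpha',\beta',\gamma'),
\]
where $(\alpha',\beta',\gamma') := (\alpha+1-\gamma,\ \beta+1-\gamma,\ 2-\gamma)$. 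Thus I would simply apply Theorem \ref{thm:MainTheorem1} with the primed parameters and multiply the resulting series by $z^{1-\gamma}$.

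Next I would carry out the algebraic bookkeeping to verify that this substitution reproduces the stated formula. The four combinations appearing in Theorem \ref{thm:MainTheorem1} transform as follows: the outer prefactor becomes $\alpha'\beta' = (\alpha+1-\gamma)(\beta+1-\gamma)$; the first base is $1-\gamma' = \gamma-1$; the second base $\alpha'+\beta'+1-\gamma' = \alpha+\beta+1-\gamma$ is invariant; and since $\alpha'+1-\gamma' = \alpha$ and $\beta'+1-\gamma' = \beta$, the third base simplifies to $(\alpha'+1-\gamma')(\beta'+1-\gamma') = \alpha\beta$. Substituting these into the formula of Theorem \ref{thm:MainTheorem1} and factoring $z^{1-\gamma}$ out front matches the corollary's right-hand side term by term in $(k,n,s)$.

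For convergence, the hypothesis $|1-\gamma'|,|\alpha'+1-\gamma'|,|\beta'+1-\gamma'|,|\alpha'+\beta'+1-\gamma'|<\tfrac{1}{2}$ translates to $|1-\gamma|,|\alpha|,|\beta|,|\alpha+\beta+1-\gamma|<\tfrac{1}{2}$, and multiplying the uniformly convergent series by the locally bounded factor $z^{1-\gamma}$ preserves absolute and uniform convergence on compact subsets of $\fU$. I do not expect a serious obstacle, since the argument is essentially a parameter change; the only point that requires care is to confirm that every parameter-dependent factor in the formula of Theorem \ref{thm:MainTheorem1} is indeed expressible through the five symmetric combinations $\alpha\beta$, $1-\gamma$, $\alpha+\beta+1-\gamma$, $\alpha+1-\gamma$, $\beta+1-\gamma$, so that the substitution is consistent. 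Finally, as in the closing remark of Theorem \ref{thm:MainTheorem1}'s proof, both sides depend analytically on $(\alpha,\beta,\gamma)$ and the formula persists by continuity at the boundary values where $\alpha$, $\beta$, or $1-\gamma$ vanish.
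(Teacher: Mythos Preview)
Your proposal is correct and matches the approach the paper itself gives in the Remark following the corollary: apply Theorem \ref{thm:MainTheorem1} with parameters $(\alpha+1-\gamma,\beta+1-\gamma,2-\gamma)$ via the identity \eqref{HGeq_sol_0_1-gamma}, and track the resulting substitutions exactly as you do. The paper also notes a second, algebraic route through the $(1,2)$-entry of $\rho_0(H_0(z))\begin{pmatrix}1&1\\0&p/\beta\end{pmatrix}$, but yours is the one it calls immediate.
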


\paragraph{Remark}
This corollary is proved immediately by using of Theorem \ref{thm:MainTheorem1} and the formula $\varphi^{(0)}_1(z)=z^{1-\gamma}F(\alpha+1-\gamma,\beta+1-\gamma,2-\gamma;z)$. However one can also prove this as an algebraic way to compute $\ds \varphi^{(0)}_1(z)=\left(\rho_0(H_0(z))\begin{pmatrix}1&1\\0&\frac{p}{\beta}\end{pmatrix}\right)_{12}$.

\section{The connection relation between the regular solutions at $z=0$ and $z=1$} \label{sec:connection_0_1_regular}

In this section, we investigate various functional relations of the multiple polylogarithms of one variable considering the $(1,1)$-element of the relation formula \eqref{connection01}, and the relations of the multiple zeta values known as Ohno-Zagier relation(\cite{OZ}) by taking the limit as $z \to 1$.

For the purpose, we first express the inverse of $\Phi_1$ \eqref{def_Phi} as a series of the MPLs, and expand the gamma functions which appear in the connection matrix $C^{01}$ \eqref{connection01} as a series of the zeta values. It is too difficult to calculate the whole of the latter series, so that we consider its degenerate form by specializing the parameters.

\subsection{The inverse of the fundamental solution matrix in the neighborhood of $z=1$} \label{subsec:connection_0_1_regular:Phi_1^-1}

Let $G$ be a solution to the hypergeometric equation \eqref{HGeq3}, then ${}^tG^{-1}$ satisfies
\begin{equation}
\frac{d}{dt}{}^tG^{-1} = \left(\frac{{}^t\rho_0(Y)}{t}+\frac{{}^t\rho_0(X)}{1-t}\right){}^tG^{-1},
\end{equation}
where $t=1-z$. So define a representation $\rho_1: \fH' \to M(2,\C)$ such as
\begin{equation}
\rho_1(X)={}^t\rho_0(Y)=\begin{pmatrix}0&\alpha\\0&q\end{pmatrix},\;\; \rho_1(Y)={}^t\rho_0(X)=\begin{pmatrix}0&0\\\beta&p\end{pmatrix}.
\end{equation}
Then the matrix-valued function ${}^t\Phi_1^{-1}$ is a fundamental solution matrix of the representation of the formal KZ equation \eqref{KZeq} by $\rho_1$
\begin{equation}
\frac{d}{dt}G = \left(\frac{\rho_1(X)}{t}+\frac{\rho_1(Y)}{1-t}\right)G. \label{HGeq_1_inv}
\end{equation}
On the other hand, the representation $\rho_1(H_0(t))$ is also a fundamental solution matrix of \eqref{HGeq_1_inv}. This representation is nothing but $\rho_0(H_0(z))$ up to changing the variable $z \to t$ and the parameters $(\alpha,\beta,p,q) \to (\beta,\alpha,q,p)$. Similarly as in Lemma \ref{lem:Phi0_rho0(H0)}, we obtain the linear relation
\begin{equation}
{}^t\Phi_1^{-1}=\rho_1(H_0(t))\begin{pmatrix}1&\frac{\alpha\beta}{(\alpha+\beta-\gamma)q}\\0&\frac{1}{\alpha+\beta-\gamma}\end{pmatrix}.
\end{equation}

Therefore the $(1,1)$ and $(2,1)$-elements of ${}^t\Phi_1^{-1}$ lead to the following proposition.

\begin{prop} \label{prop:Phi1inv_regular}
Assume that $|1-\gamma|,|\alpha+1-\gamma|,|\beta+1-\gamma|$ and $|\alpha+\beta+1-\gamma|<\frac{1}{2}$. The $(1,1)$ and $(1,2)$-elements of $\Phi_1^{-1}$ are expressed as follows;
\begin{align}
\begin{split}
(\Phi_1^{-1})_{11} &= 1+ \alpha\beta \sum_{\substack{k,n,s > 0\\k\ge n+s\\n\ge s}} G_0(k,k-n,s;1-z) \\
&\times (1-\gamma)^{k-n-s}(\alpha+\beta+1-\gamma)^{n-s}((\alpha+1-\gamma)(\beta+1-\gamma))^{s-1}, 
\end{split}\\
\begin{split}
(\Phi_1^{-1})_{12} &= \beta \sum_{\substack{k,n,s > 0\\k\ge n+s\\n\ge s}} G_1(k-1,k-n,s;1-z) \\
&\times (1-\gamma)^{k-n-s}(\alpha+\beta+1-\gamma)^{n-s}((\alpha+1-\gamma)(\beta+1-\gamma))^{s-1}.
\end{split}
\end{align}
\noindent The series of the right hand sides converge absolutely and uniformly on any compact subset $K$ of $\fU$.
\end{prop}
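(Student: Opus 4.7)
The plan is to exploit the decomposition already established in this subsection: since ${}^t\Phi_1^{-1} = \rho_1(H_0(t))\,C$ with $t=1-z$ and the constant matrix $C$ having first column $\vect{1}{0}$, one immediately reads off
\begin{equation*}
(\Phi_1^{-1})_{11} = \rho_1(H_0(t))_{11}, \qquad (\Phi_1^{-1})_{12} = \rho_1(H_0(t))_{21}.
\end{equation*}
The task therefore reduces to writing these two entries of $\rho_1(H_0(t))$ as explicit MPL-series and verifying convergence on compact subsets of $\fU$.

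First I would establish the analog of Lemma \ref{rho0calc} for $\rho_1$. Since $\rho_1(X) = {}^t\rho_0(Y)$ and $\rho_1(Y) = {}^t\rho_0(X)$, one obtains $\rho_1(W) = {}^t\rho_0(\tW^{\ast})$, where $\tW$ is the reversal of $W$ and $\ast$ swaps the letters $X \leftrightarrow Y$. This involution preserves $|W|$ and $h(W)$, since adjacent $YX$-pairs in $W$ correspond bijectively to adjacent $YX$-pairs in $\tW^{\ast}$, while sending $d(W)$ to $|W| - d(W)$. Applying Lemma \ref{rho0calc} to $\tW^{\ast}$ and transposing then yields
\begin{equation*}
\rho_1(W) = q^{|W|-d(W)-h(W)}\,p^{d(W)-h(W)}\,(\alpha\beta+pq)^{h(W)-1}\,M_1(W),
\end{equation*}
where $M_1(W)_{11} = \alpha\beta$ precisely when $W \in X\fH Y$, and $M_1(W)_{21} = \beta q$ precisely when $W$ ends in $Y$.

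Inserting these formulas into $\rho_1(H_0(t)) = \sum_w \Li(w;t)\rho_1(W)$ and collecting the nonzero contributions by $(|w|,d(w),h(w)) = (k,n,s)$ gives
\begin{align*}
\rho_1(H_0(t))_{11} &= \alpha\beta \sum_{k,n,s} G_0(k,n,s;t)\,q^{k-n-s} p^{n-s}(\alpha\beta+pq)^{s-1},\\
\rho_1(H_0(t))_{21} &= \beta q \sum_{k,n,s} G_1(k,n,s;t)\,q^{k-n-s} p^{n-s}(\alpha\beta+pq)^{s-1}.
\end{align*}
The substitution $n \mapsto k-n$ swaps the exponents of $p = 1-\gamma$ and $q = \alpha+\beta+1-\gamma$; together with the identity $\alpha\beta + pq = (\alpha+1-\gamma)(\beta+1-\gamma)$ (verified by direct expansion), this converts the first sum into the announced formula for $(\Phi_1^{-1})_{11}$. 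For the second sum, the additional shifts $(k,n) \mapsto (k+1,n+1)$ absorb the leading factor $q$ into $q^{n-s}$ and shift the first argument of $G_1$ down by one, yielding $G_1(k-1,k-n,s;1-z)$.

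The main obstacle is the careful bookkeeping of summation ranges under these substitutions: one must verify that the nonvanishing region of $g_1$ (namely $k \geq n+s-1$, $n \geq s$, $k,n,s \geq 1$) transforms, after the two shifts, precisely to the target region $\{k,n,s \geq 1,\ k \geq n+s,\ n \geq s\}$ stated in the proposition; this is a routine direct check. Absolute and uniform convergence on any compact $K \subset \fU$ then follows from Corollary \ref{rho0H0estimate} applied to $\rho_1$, since the hypothesis on $|1-\gamma|, |\alpha+1-\gamma|, |\beta+1-\gamma|, |\alpha+\beta+1-\gamma|$ is symmetric under the exchange $(\alpha,p) \leftrightarrow (\beta,q)$.
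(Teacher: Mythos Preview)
Your argument is correct and follows the same route as the paper: read off the $(1,1)$ and $(2,1)$ entries of $\rho_1(H_0(t))$ from the relation ${}^t\Phi_1^{-1}=\rho_1(H_0(t))\,C$ and then reindex. The paper shortcuts your word-involution step by observing directly that $\rho_1$ is $\rho_0$ with the parameter swap $(\alpha,\beta,p,q)\to(\beta,\alpha,q,p)$, so Lemma~\ref{rho0calc} and the computations in the proof of Theorem~\ref{thm:MainTheorem1} apply verbatim after this swap; note also that your intermediate display for $\rho_1(H_0(t))_{11}$ omits the leading~$1$ contributed by the empty word.
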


\subsection{The expansion of the connection matrix as a series of the zeta values} \label{subsec:connection_0_1_regular:gamma_expand}

To compare both sides of the connection relation \eqref{connection01}, we expand the $(1,1)$-element of the connection matrix $(C^{01})_{11}=\frac{\Gamma(\gamma)\Gamma(\gamma-\alpha-\beta)}{\Gamma(\gamma-\alpha)\Gamma(\gamma-\beta)}$ as a series of the zeta values. We set $p=1-\gamma, q=\alpha+\beta+1-\gamma$ and $r=(\alpha+1-\gamma)(\beta+1-\gamma)$.

The following formula for the gamma function is famous(\cite{WW});
\begin{equation}
\frac{1}{\Gamma(1-z)}=\exp(-cz-\sum_{n=2}^{\infty}\frac{\zeta(n)}{n}z^n), \label{Gamma_expand}
\end{equation}
where the complex number $c$ is the Euler constant, namely\\ $c=\lim_{n\to \infty}(\sum_{k=1}^n \frac{1}{k}-\log n)$.

For a sequence of complex numbers $\text{\boldmath $a$}=(a_1,a_2,\ldots)$ and a non-negative integer $n$, we introduce the Schur polynomial $P_n(\text{\boldmath $a$})$ through the following generating function;

\begin{equation}
\exp(\sum_{n=1}^{\infty} a_n z^n)=\sum_{n=0}^{\infty}P_n(\text{\boldmath $a$})z^n, \label{def_Schur}
\end{equation}
that is
\begin{align}
P_0(\text{\boldmath $a$})&=1, \notag \\
P_n(\text{\boldmath $a$})&=\sum_{k_1+2k_2+3k_3+\cdots=n}\frac{a_1^{k_1}}{k_1!}\frac{a_2^{k_2}}{k_2!}\frac{a_3^{k_3}}{k_3!}\cdots \quad (n \ge 1).
\end{align}

We also define the integers $N_{i,j}^{(n)}$ as 
\begin{equation}
\begin{array}{ll}
N_{i,j}^{(n)}=0, & (i<0 \text{ or } j<0)\\
N_{0,0}^{(0)}=1, & (i=j=0)\\
a^n+b^n=\sum_{i,j}N_{i,j}^{(n)}(a+b)^i(ab)^j. &(\text{otherwise}) \label{def_N}
\end{array}
\end{equation}
\noindent Since $a^n+b^n$ is a symmetric polynomial of $a$ and $b$, this definition is well-defined and we have $N_{i,j}^{(n)}=0$ if $i+2j\neq n$. We denote by $N_{i,j}=N_{i,j}^{(i+2j)}$.

Under these notations, one can show the following lemma and proposition.
\begin{lem} \label{lem:A(a)A(b)expand}
In the algebra of formal power series $\C[[a,b]]$,
\begin{equation}
\left(\sum_{i=0}^{\infty}A_i a^i\right)\left(\sum_{i=0}^{\infty}A_i b^i\right)=\sum_{k,l=0}^{\infty}\left(\sum_{i=0}^l A_iA_{2l+k-i}N_{k,l-i}\right)(a+b)^k(ab)^l.
\end{equation}
\end{lem}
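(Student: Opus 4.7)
The plan is to expand the left hand side as $\sum_{i,j \ge 0} A_i A_j\, a^i b^j$ and to rewrite each term $a^i b^j$ in terms of $(a+b)$ and $ab$ using the defining identity \eqref{def_N} for the integers $N_{k,l}$. Because the coefficient $A_i A_j$ is symmetric in $(i,j)$, the natural first move is to split the double sum into the diagonal part $\sum_{i \ge 0} A_i^2 (ab)^i$ and the off-diagonal part, which pairs $(i,j)$ with $(j,i)$ and collapses to
\[
\sum_{0 \le i < j} A_i A_j\, (ab)^i \bigl(a^{j-i} + b^{j-i}\bigr).
\]

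Next I would substitute, for each $m \ge 1$, the expansion $a^m + b^m = \sum_{k+2l'=m} N_{k,l'}(a+b)^k (ab)^{l'}$ coming from \eqref{def_N}, applied with $m = j - i$, and then regroup by the pair of exponents $(k, L)$ where $L := i + l'$ records the total power of $ab$. Since $m = k + 2(L-i)$ forces $j = 2L + k - i$, the off-diagonal part contributes
\[
\sum_{k,\,L \ge 0}\Bigl(\sum_{i} A_i A_{2L+k-i}\, N_{k, L-i}\Bigr)(a+b)^k (ab)^L,
\]
where the inner sum ranges over those $i$ with $m = k + 2(L-i) \ge 1$: this is $i = 0, \dots, L$ when $k \ge 1$, and $i = 0, \dots, L-1$ when $k = 0$.

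Finally I would check that the single missing term, namely $k=0$ and $i=L$, is supplied precisely by the diagonal contribution $A_L^2 (ab)^L$, because the convention $N_{0,0}^{(0)} = 1$ built into \eqref{def_N} gives $A_L A_L N_{0,0} = A_L^2$. Consolidating the diagonal and off-diagonal parts then produces the uniform formula $\sum_{i=0}^{L} A_i A_{2L+k-i}\, N_{k, L-i}$ for every $(k,L)$, which is the claimed identity. The only subtle point, and the step I expect to be the main obstacle, is this boundary case $m = 0$: naively applying the expansion of $a^m + b^m$ at $m = 0$ would mis-count, and the stipulation $N_{0,0}^{(0)} = 1$ (rather than $a^0+b^0 = 2$) in the definition is exactly what makes the single formula on the right hand side absorb both the diagonal and off-diagonal contributions cleanly.
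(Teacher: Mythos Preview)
Your argument is correct: the diagonal/off-diagonal split together with the substitution $a^{m}+b^{m}=\sum_{k+2l'=m}N_{k,l'}(a+b)^{k}(ab)^{l'}$ and the reindexing $L=i+l'$ recovers exactly the coefficient $\sum_{i=0}^{L}A_iA_{2L+k-i}N_{k,L-i}$, and your handling of the boundary case $k=0$, $i=L$ via the convention $N_{0,0}^{(0)}=1$ is precisely the point. The paper does not supply a proof of this lemma (it only says ``one can show the following lemma''), so there is nothing to compare against; your write-up is the natural verification the paper omits.
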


\begin{prop} \label{prop:Gamma_expand}
For the sequence $\bzeta=(0,\frac{\zeta(2)}{2},\frac{\zeta(3)}{3},\frac{\zeta(4)}{4},\cdots)$ and the complex numbers $p=1-\gamma, q=\alpha+\beta+1-\gamma$ and $r=(\alpha+1-\gamma)(\beta+1-\gamma)$, we obtain the following expansion;
\begin{multline}
\frac{\Gamma(\gamma)\Gamma(\gamma-\alpha-\beta)}{\Gamma(\gamma-\alpha)\Gamma(\gamma-\beta)} =\\
\sum_{k,l,m=0}^{\infty}\left( \sum_{i=0}^k \sum_{j=0}^l \sum_{\mu=0}^m \binom{i+j}{i}P_{k-i}(\bzeta)P_{l-j}(\bzeta)P_{\mu}(-\bzeta)P_{i+j+2m-\mu}(-\bzeta)N_{i+j,m-\mu}\right) \\
\times p^k q^l r^m. \label{Gamma_expand}
\end{multline}
\end{prop}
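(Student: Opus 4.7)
The plan is to apply \eqref{Gamma_expand} to each of the four gamma factors individually, and then to repackage the resulting product as a triple series in the variables $p$, $q$, $r$ via Lemma~\ref{lem:A(a)A(b)expand}. Set $a=\alpha+1-\gamma$ and $b=\beta+1-\gamma$, so that $\Gamma(\gamma)=\Gamma(1-p)$, $\Gamma(\gamma-\alpha-\beta)=\Gamma(1-q)$, $\Gamma(\gamma-\alpha)=\Gamma(1-a)$, and $\Gamma(\gamma-\beta)=\Gamma(1-b)$. The elementary identities $a+b=p+q$ and $ab=r$ will drive the whole computation.

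First I would rewrite \eqref{Gamma_expand} together with the definition \eqref{def_Schur} of Schur polynomials in the form
\[\Gamma(1-z)^{\pm 1} = e^{\pm cz}\sum_{n\ge 0}P_n(\pm\bzeta)\,z^n.\]
Applying this to the numerator (positive exponent, $z\in\{p,q\}$) and the denominator (negative exponent, $z\in\{a,b\}$), the Euler-constant factor contributes $e^{c(p+q-a-b)}=1$, so
\[\frac{\Gamma(\gamma)\Gamma(\gamma-\alpha-\beta)}{\Gamma(\gamma-\alpha)\Gamma(\gamma-\beta)} =\Bigl(\sum_{k'}P_{k'}(\bzeta)p^{k'}\Bigr)\Bigl(\sum_{l'}P_{l'}(\bzeta)q^{l'}\Bigr)\Bigl(\sum_{i}P_i(-\bzeta)a^i\Bigr)\Bigl(\sum_{j}P_j(-\bzeta)b^j\Bigr).\]

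Next I would apply Lemma~\ref{lem:A(a)A(b)expand} with $A_n=P_n(-\bzeta)$ to the last two factors. Since these factors share a common coefficient sequence, the lemma rewrites their product as
\[\sum_{\mu,m\ge 0}\Bigl(\sum_{\nu=0}^{m}P_\nu(-\bzeta)P_{2m+\mu-\nu}(-\bzeta)\,N_{\mu,m-\nu}\Bigr)(p+q)^\mu r^m,\]
thanks to $a+b=p+q$ and $ab=r$. Expanding $(p+q)^\mu=\sum_{u+v=\mu}\binom{\mu}{u}p^uq^v$, multiplying by the remaining $p^{k'}$ and $q^{l'}$ series, and collecting like powers by the reindexing $k=k'+u$, $l=l'+v$, $i=u$, $j=v$ (so that $\mu=i+j$) yields exactly the coefficient of $p^kq^lr^m$ claimed in the statement, with the outer summation variable $\mu$ in the final formula playing the role of the inner index $\nu$ above.

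The main obstacle is really just bookkeeping: one has to align the four summation indices $k'$, $l'$, $i$, $j$ — which come from two different origins (the $p$- and $q$-series on one side, the symmetric $a,b$-series on the other) — with the binomial factor $\binom{i+j}{i}$ and the two-index symbol $N_{i+j,m-\mu}$, and check that both the total degree and the individual $p$- and $q$-degrees match on each side. No convergence issue arises, since the identity is understood as an equality of formal power series in $\C[[p,q,r]]$ (equivalently in $\C[[\alpha,\beta,1-\gamma]]$), which is valid in a neighbourhood of the origin by the holomorphy of $1/\Gamma$.
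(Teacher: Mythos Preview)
Your proposal is correct and follows essentially the same route as the paper's own proof: expand each of the four gamma factors via the Taylor series of $\log\Gamma(1-z)$, observe that the Euler-constant contributions cancel because $p+q=a+b$, rewrite the last two factors as a series in $(a+b,\,ab)=(p+q,\,r)$ by Lemma~\ref{lem:A(a)A(b)expand}, expand $(p+q)^{\mu}$ binomially, and merge with the $p$- and $q$-series. The only difference is cosmetic---you spell out the reindexing $k=k'+i$, $l=l'+j$ and the renaming $\nu\mapsto\mu$ a bit more explicitly than the paper does.
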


\begin{proof}
\begin{align*}
&\frac{\Gamma(\gamma)\Gamma(\gamma-\alpha-\beta)}{\Gamma(\gamma-\alpha)\Gamma(\gamma-\beta)}=\frac{\Gamma(1-p)}{e^{c(1-p)}}\frac{\Gamma(1-q)}{e^{c(1-q)}}\frac{e^{c(1-(\alpha+1-\gamma))}}{\Gamma(1-(\alpha+1-\gamma))}\frac{e^{c(1-(\beta+1-\gamma))}}{\Gamma(1-(\beta+1-\gamma)} \\
&=\exp(\sum_{n=2}^{\infty}\frac{\zeta(n)}{n}p^n)\exp(\sum_{n=2}^{\infty}\frac{\zeta(n)}{n}q^n)\\
&\qquad \qquad \times \exp(-\sum_{n=2}^{\infty}\frac{\zeta(n)}{n}(\alpha+1-\gamma)^n)\exp(-\sum_{n=2}^{\infty}\frac{\zeta(n)}{n}(\beta+1-\gamma)^n) \\
&=\sum_{n=0}^{\infty}P_n(\zeta)p^n \sum_{n=0}^{\infty}P_n(\zeta)q^n \sum_{n=0}^{\infty}P_n(-\zeta)(\alpha+1-\gamma)^n \sum_{n=0}^{\infty}P_n(-\zeta)(\beta+1-\gamma)^n.
\end{align*}
Here by making use of Lemma \ref{lem:A(a)A(b)expand}, we have
\begin{align*}
&\frac{\Gamma(\gamma)\Gamma(\gamma-\alpha-\beta)}{\Gamma(\gamma-\alpha)\Gamma(\gamma-\beta)}=\sum_{n=0}^{\infty}P_n(\zeta)p^n \sum_{n=0}^{\infty}P_n(\zeta)q^n \\
&\qquad \times \sum_{k,l,m}\binom{k+l}{k}\left(\sum_{\mu=0}^{m}P_{\mu}(-\bzeta)P_{k+l+2m-\mu}(-\bzeta)N_{k+l,m-\mu}\right)p^k q^l r^m \\
&\qquad =\text{the right hand side of \eqref{Gamma_expand}}.
\end{align*}
\end{proof}

\subsection{Functional relations obtained from the $(1,1)$-element of the connection relation \eqref{connection01}} \label{subsec:connection_0_1_regular:MPLrel_01}

Expanding the $(1,1)$-element of \eqref{connection01}
\begin{equation}
(\Phi_1^{-1})_{11}(\Phi_0)_{11} + (\Phi_1^{-1})_{12}(\Phi_0)_{21} = (C^{01})_{11} \label{connection01_11}
\end{equation}
to series in $p,q,r$, and comparing coefficients of $p^k q^l r^m$ in both sides, we obtain the following theorem.

\begin{thm} \label{thm:MPLrel_01}
Put $\bar{G}_i(k,n,s;z)=G_i(k,n,s;z)-G_i(k,n,s+1;z) \quad (i=0,1)$. Then we have
\begin{align}
&\bar{G}_0(k+l+2m,l+m,m;z)+\bar{G}_0(k+l+2m,k+m,m;1-z) \notag \\*
&+\sum_{\substack{k'+k''=k\\l'+l''=l\\m'+m''=m}}\Big( \bar{G}_0(k'\!\!\!+\!l'\!\!\!+\!2m',l'\!\!\!+\!m',m';z)\bar{G}_0(k''\!\!\!+\!l''\!\!\!+\!2m'',k''\!\!\!+\!m'',m'';1-z) \notag \\*
&\quad +\bar{G}_1(k'\!\!\!+\!l'\!\!\!+\!2m'\!\!\!-\!1,l'\!\!\!+\!m',m';z)G_1(k''\!\!\!+\!l''\!\!\!+\!2m''\!\!\!+\!1,k''\!\!\!+\!m''\!\!\!+\!1,m''\!\!\!+\!1;1-z)\Big) \notag \\*
&= \sum_{i=0}^k \sum_{j=0}^l \sum_{\mu=0}^m \binom{i+j}{i}P_{k-i}(\bzeta)P_{l-j}(\bzeta)P_{\mu}(-\bzeta)P_{i+j+2m-\mu}(-\bzeta)N_{i+j,m-\mu}. \label{MPLrel_01}
\end{align}
\end{thm}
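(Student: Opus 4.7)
The plan is to expand both sides of the $(1,1)$-entry of the connection relation \eqref{connection01},
\[
(\Phi_1^{-1})_{11}(\Phi_0)_{11} + (\Phi_1^{-1})_{12}(\Phi_0)_{21} = (C^{01})_{11},
\]
as absolutely convergent power series in $p = 1-\gamma$, $q = \alpha+\beta+1-\gamma$, $r = (\alpha+1-\gamma)(\beta+1-\gamma)$, and then to equate the coefficients of $p^kq^lr^m$. The right-hand side is already so expanded by Proposition \ref{prop:Gamma_expand}. For the left-hand side I will use Theorem \ref{thm:MainTheorem1} and Proposition \ref{prop:Phi1inv_regular} directly for $(\Phi_0)_{11}$, $(\Phi_1^{-1})_{11}$, $(\Phi_1^{-1})_{12}$, and compute $(\Phi_0)_{21} = \frac{1}{\beta}z\frac{d}{dz}(\Phi_0)_{11}$ by differentiating Theorem \ref{thm:MainTheorem1} via $z\frac{d}{dz}G_0(k,n,s;z) = G_1(k-1,n,s;z)$. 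Absolute convergence in $(p,q,r)$ on a polydisc around the origin, which legitimizes term-by-term coefficient comparison, is provided by Corollary \ref{rho0H0estimate} and its symmetric analogue obtained under the swap $\alpha\leftrightarrow\beta,\ p\leftrightarrow q,\ z\leftrightarrow 1-z$ that produced Proposition \ref{prop:Phi1inv_regular}.

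The key algebraic device is $\alpha\beta = r - pq$, which turns the overall $\alpha\beta$ appearing in Theorem \ref{thm:MainTheorem1} into a polynomial in $(p,q,r)$. After the reindexing $(i,j,l) = (k-n-s,\,n-s,\,s)$, the $r$-piece of $(r-pq)\sum G_0(k,n,s;z)p^{k-n-s}q^{n-s}r^{s-1}$ contributes $G_0(i+j+2l,\,j+l,\,l;z)$ at level $p^iq^jr^l$, while the $-pq$-piece contributes $-G_0(i+j+2l,\,j+l,\,l+1;z)$ at the shifted level $p^{i+1}q^{j+1}r^{l-1}$; summed together these produce $\bar G_0(k+l+2m,\,l+m,\,m;z)$ as the coefficient of $p^kq^lr^m$ in $(\Phi_0)_{11}$, with the conventions on $G_0$ absorbing every boundary case. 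An identical computation with $p,q$ swapped yields $\bar G_0(k+l+2m,\,k+m,\,m;1-z)$ as the coefficient in $(\Phi_1^{-1})_{11}$. The Cauchy product then produces the first two linear summands together with the $\bar G_0\cdot\bar G_0$ convolution in \eqref{MPLrel_01}, and boundary terms in the convolution vanish automatically because $\bar G_0(0,0,0;\cdot)=0$.

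For the off-diagonal cross term the prefactor is again $\alpha\beta$ but split as $\beta$ and $\alpha$ between the two factors. I will write $(\Phi_1^{-1})_{12}(\Phi_0)_{21} = \alpha\beta\,\tilde U(z)\tilde V(z) = \bigl[(r-pq)\tilde V(z)\bigr]\cdot \tilde U(z)$, where $\tilde U$ and $\tilde V$ are the MPL series coming from $(\Phi_1^{-1})_{12}/\beta$ and $(\Phi_0)_{21}/\alpha$ respectively. The same $(r-pq)$ combining trick (now applied with $z\frac{d}{dz}\bar G_0 = \bar G_1$ in the background) produces $\bar G_1(k+l+2m-1,\,l+m,\,m;z)$ as the coefficient of $p^kq^lr^m$ in $(r-pq)\tilde V$, while $\tilde U$ contributes the un-barred $G_1(k+l+2m+1,\,k+m+1,\,m+1;1-z)$. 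Convolution gives precisely the $\bar G_1\cdot G_1$ sum in \eqref{MPLrel_01}, and equating coefficients with Proposition \ref{prop:Gamma_expand} completes the argument.

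The main obstacle will be the index bookkeeping rather than any deep idea: each invocation of $\alpha\beta = r-pq$ induces a shift that simultaneously decreases the weight/depth parameters and increases the height parameter, and these shifts must be reconciled consistently across four different triply-indexed sums so that the conventions $G_0 = G_1 = 0$ outside the admissible ranges absorb every edge case without losing or double-counting a term. Once this bookkeeping is done the theorem follows by matching coefficients, valid on the indicated polydisc by the absolute convergence statements.
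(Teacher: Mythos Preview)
Your proposal is correct and follows exactly the approach of the paper: the paper's entire proof is the single sentence ``Expanding the $(1,1)$-element of \eqref{connection01} \ldots\ to series in $p,q,r$, and comparing coefficients of $p^k q^l r^m$ in both sides, we obtain the following theorem,'' and you have supplied the detailed bookkeeping (in particular the $\alpha\beta = r - pq$ reindexing that produces $\bar G_i$) that the paper leaves implicit. The only caveat is the trivial case $k=l=m=0$, where both sides equal~$1$ coming from the constant terms of $(\Phi_1^{-1})_{11}$ and $(\Phi_0)_{11}$ rather than from any $\bar G$ term; this is harmless but worth noting.
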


Since $\lim_{z \to 1}G_0(k,n,s;z) = G_0(k,n,s;1),\; \lim_{z \to 1}G_0(k,n,s;1-z) = 0,$ and $\lim_{z \to 1}G_1(k,n,s;z)G_1(k',n',s';1-z) = 0$, the limit of the equation \eqref{MPLrel_01} as $z \to 1$ implies the following corollary, which was originally shown by \cite{OZ}.

\begin{cor} \label{cor:MPLrel_01:Ohno-Zagier}
\begin{align}
&\bar{G}_0(k+l+2m,l+m,m;1)=\bar{G}_0(k+l+2m,k+m,m;1) \notag \\
&= \sum_{i=0}^k \sum_{j=0}^l \sum_{\mu=0}^m \binom{i+j}{i}P_{k-i}(\bzeta)P_{l-j}(\bzeta)P_{\mu}(-\bzeta)P_{i+j+2m-\mu}(-\bzeta)N_{i+j,m-\mu} \label{MZVrel_01}
\end{align}
\end{cor}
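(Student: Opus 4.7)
The plan is to specialise the functional equation \eqref{MPLrel_01} of Theorem \ref{thm:MPLrel_01} at $z=1$, invoke the three boundary behaviours listed just before the corollary to kill all but the first term on the left-hand side, and then deduce the second equality from a symmetry of the right-hand side.

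First I would take the limit $z \to 1$ in \eqref{MPLrel_01} and examine each piece. The term $\bar{G}_0(k+l+2m,l+m,m;z)$ is a finite combination of MPLs indexed by words in $\fh^0$, so it converges termwise to the MZV combination $\bar{G}_0(k+l+2m,l+m,m;1)$. The term $\bar{G}_0(k+l+2m,k+m,m;1-z)$ collapses to $0$, because every $\Li(w;\cdot)$ with $w\in\fh^1\setminus\{1\}$ vanishes at the origin by the series expansion \eqref{MPLdef}. For the two crossed sums, the factor evaluated at $1-z$ is $O(1-z)$ at $z=1$ (again by \eqref{MPLdef}, the leading monomial of any nonempty MPL in $\fh^1$ is a positive power of the argument), while the companion factor evaluated at $z$ grows at worst like a polynomial in $\log(1-z)$, as one sees from the iterated integral representation \eqref{rep_ite_MPL} by induction on word length. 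Hence each crossed summand is of order $(1-z)(\log(1-z))^{N}$ and tends to $0$.

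After these cancellations the limit of \eqref{MPLrel_01} reads
\begin{equation*}
\bar{G}_0(k+l+2m,l+m,m;1) = \sum_{i=0}^k \sum_{j=0}^l \sum_{\mu=0}^m \binom{i+j}{i}P_{k-i}(\bzeta)P_{l-j}(\bzeta)P_{\mu}(-\bzeta)P_{i+j+2m-\mu}(-\bzeta)N_{i+j,m-\mu}.
\end{equation*}
To obtain the second equality I would exhibit a symmetry of the right-hand side under $(k,l)\mapsto(l,k)$: renaming $i\leftrightarrow j$ in the triple sum and using $\binom{i+j}{i}=\binom{i+j}{j}$ turns the $(l,k)$-sum into the $(k,l)$-sum. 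Substituting $(k,l)\mapsto(l,k)$ in the displayed identity therefore yields $\bar{G}_0(k+l+2m,k+m,m;1)$ equal to the same right-hand side, which is the second claimed equality. The only genuinely delicate point is the polynomial-in-$\log(1-z)$ growth bound for $G_1$ that enters the crossed-term estimate; this is the step I would write up most carefully, by induction on word length from \eqref{rep_ite_MPL}.
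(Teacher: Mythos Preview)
Your argument is correct and follows the paper's own route: specialise \eqref{MPLrel_01} at $z\to 1$, use the three limits stated just before the corollary to kill everything on the left except the first term, and read off the identity. Your explicit $(k,l)\leftrightarrow(l,k)$ symmetry of the right-hand side (via $i\leftrightarrow j$) cleanly supplies the second equality, which the paper leaves implicit, and your polynomial-in-$\log(1-z)$ bound is exactly the right way to make the crossed $\bar{G}_1\cdot G_1$ limit rigorous.
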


\subsection{Various examples of functional relations of the MPLs} \label{subsec:connection_0_1_regular:examples}

In what follows, computing the formula \eqref{MPLrel_01} for some lower $l$ and $m$, or by specializing the parameters, we show various concrete relations of the multiple polylogarithms of one variable.

\subsubsection{The case of $m=0$ and $l=1$}
By easy calculation we have
\begin{align}
&(\text{the left hand side of \eqref{MPLrel_01}})\big|_{m=0,l=1} \notag \\
&=-\Li_{k+1}(z)-\Li_{2,\underbrace{\scriptstyle 1,\ldots,1}_{k-1\text{times}}}(1-z) - \sum_{i=1}^k \Li_{i}(z)\Li_{\underbrace{\scriptstyle 1,\ldots,1}_{k-i+1\text{times}}}(1-z), \\
\intertext{on the other hand,}
&(\text{the right hand side of \eqref{MPLrel_01}})\big|_{m=0,l=1} \notag \\
&=\sum_{i=0}^k \sum_{j=0}^1 \binom{i+j}{i}P_{k-i}(\bzeta)P_{1-j}(\bzeta)P_{i+j}(-\bzeta) =\sum_{i=0}^k (i+1)P_{k-i}(\bzeta)P_{i+1}(-\bzeta) \notag\\
&=\text{the coefficient of $z^k$ in } \exp(\sum_{n=2}^{\infty} \frac{\zeta(n)}{n}z^n) \frac{d}{dz}\exp(-\sum_{n=2}^{\infty} \frac{\zeta(n)}{n}z^n) \notag \\
&=\text{the coefficient of $z^k$ in } -\sum_{i=2}^{\infty}\zeta(i)z^{i-1} \notag \\
&= -\zeta(k+1).
\end{align}
Consequently we obtain
\begin{equation}
\Li_{k+1}(z)+\Li_{2,\underbrace{\scriptstyle 1,\ldots,1}_{k-1\text{times}}}(1-z) + \sum_{i=1}^k \Li_{i}(z)\Li_{\underbrace{\scriptstyle 1,\ldots,1}_{k-i+1\text{times}}}(1-z)=\zeta(k+1).
\end{equation}
This is known as Euler's inversion formula for polylogarithms.

\subsubsection{The case of $m=0$ and $l=2$}

Similarly we have
\begin{align}
&(\text{the left hand side of \eqref{MPLrel_01}})\big|_{m=0,l=2} \notag \\
&=-\Li_{k+1,1}(z)-\Li_{3,\underbrace{\scriptstyle 1,\ldots,1}_{k-1 \text{times}}}(1-z) \notag \\
&\qquad \qquad -\Li_{1}(z)\Li_{2,\underbrace{\scriptstyle 1,\ldots,1}_{k-1 \text{times}}}(1-z) - \sum_{i=1}^k \Li_{i,1}(z)\Li_{\underbrace{\scriptstyle 1,\ldots,1}_{k-i+1\text{times}}}(1-z), \\
\intertext{and}
&(\text{the right hand side of \eqref{MPLrel_01}})\big|_{m=0,l=2} \notag \\
&=\sum_{i=0}^k \sum_{j=0}^2 \binom{i+j}{i}P_{k-i}(\bzeta)P_{l-j}(\bzeta)P_{i+j}(-\bzeta) \notag \\
&=\sum_{i=0}^k \frac{(i+2)(i+1)}{2}P_{k-i}(\bzeta)P_{i+2}(-\bzeta) \notag \\
&=\text{the coefficient of $z^k$ in }\frac{1}{2}\exp(\sum_{n=2}^{\infty} \frac{\zeta(n)}{n}z^n) \frac{d^2}{dz^2}\exp(-\sum_{n=2}^{\infty} \frac{\zeta(n)}{n}z^n) \notag \\
&=-\frac{k+1}{2}\zeta(k+2)+\frac{1}{2}\sum_{i=1}^{k-1}\zeta(i+1)\zeta(k-i+1).
\end{align}

Therefore, we have
\begin{align}
&\Li_{k+1,1}(z)+\Li_{3,\underbrace{\scriptstyle 1,\ldots,1}_{k-1 \text{times}}}(1-z)+\Li_{1}(z)\Li_{2,\underbrace{\scriptstyle 1,\ldots,1}_{k-1 \text{times}}}(1-z) \notag \\
&\qquad + \sum_{i=1}^k \Li_{i,1}(z)\Li_{\underbrace{\scriptstyle 1,\ldots,1}_{k-i+1\text{times}}}(1-z) \notag \\
&=\frac{k+1}{2}\zeta(k+2)-\frac{1}{2}\sum_{i=1}^{k-1}\zeta(i+1)\zeta(k-i+1).
\end{align}

Especially taking the limit as $z$ tends to $1$, we get the formula shown by Euler(\cite{Z1});
\begin{equation}
\zeta(k+1,1)-\frac{k+1}{2}\zeta(k+2)+\frac{1}{2}\sum_{i=1}^{k-1}\zeta(i+1)\zeta(k-i+1)=0. \label{Euler'sZetaRelation}
\end{equation}

\subsubsection{The sum formula for the MPLs}

Regarding the coefficients of $\alpha^1$ in both sides of \eqref{connection01_11} as a series in $p=(1-\gamma), q'=(\beta+1-\gamma)$, we obtain the following proposition;

\begin{prop} \label{prop:MPL_sum_formula}
For any positive integers $k>n>0$,
\begin{align}
\sum_{s}&G_0(k,n,s;z)+\sum_{s}G_0(k,k-n,s;1-z) \notag \\
&+\sum_{\substack{k'+k''=k\\n'+n''=n}}\sum_{s'}G_1(k',n',s';z)\sum_{s''}G_1(k'',k''-n'',s'';1-z) = \zeta(k). \label{MPL_sum_formula}
\end{align}
\end{prop}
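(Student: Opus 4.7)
The plan is to specialize the argument that produced Theorem~\ref{thm:MPLrel_01}: I would re-expand both sides of~\eqref{connection01_11} after changing variables from $(p,q,r)$ to $(p,q',\alpha)$, where $q'=\beta+1-\gamma$, so that $q=q'+\alpha$, $r=(p+\alpha)q'$, and $\beta=q'-p$, and then extract the coefficient of $\alpha^{1}$ as a power series in $p$ and $q'$.

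The crucial simplification, which also explains the shape of the statement, is that at $\alpha=0$ the monomial $p^{k-n-s}q^{n-s}r^{s-1}$ collapses to $p^{k-n-s}q'^{n-s}(pq')^{s-1}=p^{k-n-1}q'^{n-1}$, independently of $s$. Consequently each of $(\Phi_{0})_{11}$, $(\Phi_{1}^{-1})_{11}$, $(\Phi_{1}^{-1})_{12}$ and $(\Phi_{0})_{21}=\frac{1}{\beta}z\frac{d}{dz}\varphi^{(0)}_{0}$ (which produces $G_{1}$'s through \eqref{MPL_diff_x}), when Taylor-expanded in $\alpha$, automatically groups the $G_{i}$'s by fixed weight and depth, producing the inner sums $\sum_{s}G_{i}(k,n,s;\cdot)$ that appear in~\eqref{MPL_sum_formula}. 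I would next verify $[\alpha^{0}](\Phi_{0})_{11}=[\alpha^{0}](\Phi_{1}^{-1})_{11}=1$, $[\alpha^{0}](\Phi_{0})_{21}=0$, and that the $\alpha^{1}$ parts of all four entries carry a common factor $(q'-p)$ inherited from the leading $\alpha\beta$ or $\beta$. Plugging these into~\eqref{connection01_11}, the $\alpha^{1}$ coefficient of the left-hand side becomes $(q'-p)$ times the sum of two $G_{0}$ terms and one product of two $G_{1}$ series.

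For the right-hand side, logarithmic differentiation at $\alpha=0$ gives $[\alpha^{1}](C^{01})_{11}=\psi(\gamma)-\psi(\gamma-\beta)=\psi(1-p)-\psi(1-q')$, while differentiating~\eqref{Gamma_expand} once yields $\psi(1-z)=-c-\sum_{n\ge 1}\zeta(n+1)z^{n}$. Therefore $\psi(1-p)-\psi(1-q')=\sum_{n\ge 1}\zeta(n+1)(q'^{n}-p^{n})=(q'-p)\sum_{a,b\ge 0}\zeta(a+b+2)p^{a}q'^{b}$. Cancelling the common factor $(q'-p)$ from both sides and equating the coefficient of $p^{k-n-1}q'^{n-1}$, so that $a+b+2=k$, will produce~\eqref{MPL_sum_formula} with right-hand side $\zeta(k)$.

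The only non-trivial bookkeeping step will be in the bilinear term from $(\Phi_{1}^{-1})_{12}(\Phi_{0})_{21}$: matching $p$- and $q'$-powers forces $k_{1}+k_{2}=k+2$ and $n_{1}+n_{2}=n+1$, and the substitution $k':=k_{2}-1$, $k'':=k_{1}-1$, $n':=n_{2}$, $n'':=n_{1}-1$ recasts these constraints as $k'+k''=k$, $n'+n''=n$, with the second factor becoming precisely $G_{1}(k'',k''-n'',\cdot;1-z)$ as in the statement. All other computations are routine verifications of coefficient manipulations once this reparametrization has been made.
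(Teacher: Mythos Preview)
Your proposal is correct and follows essentially the same route as the paper: both extract the $\alpha^{1}$-coefficient of \eqref{connection01_11} and compare power series in $p=1-\gamma$ and $q'=\beta+1-\gamma$, exploiting the collapse $p^{k-n-s}q^{n-s}r^{s-1}\big|_{\alpha=0}=p^{k-n-1}q'^{n-1}$ to sum over the height~$s$. The only organizational difference is that the paper does not factor out $(q'-p)$ but instead reads off the coefficient of $p^{k}q'^{l}$ directly---since the right-hand side $\psi(1-p)-\psi(1-q')$ contains only pure powers, the mixed coefficients vanish, yielding a telescoping recursion in the depth that terminates at the base case $\zeta(k)$; your factoring of $(q'-p)$ from both sides and direct identification of the coefficient of $p^{k-n-1}q'^{n-1}$ is simply the closed-form version of that same telescoping.
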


Particularly, the limit of the formula \eqref{MPL_sum_formula} as $z \to 1$ is the sum formula with the multiple zeta values shown by Granville(\cite{Gr})-Zagier(\cite{Z2});
\begin{equation}
\sum_{s}G_0(k,n,s;1) = \zeta(k).
\end{equation}

\begin{proof}[Proof of Proposition \ref{prop:MPL_sum_formula}]

\begin{equation}
\frac{d}{d\alpha}\text{(the right hand side of \eqref{connection01_11})} \Big|_{\alpha \to 0} =-\sum_k \zeta(k+1) p^k + \sum_l \zeta(l+1) (\beta+1-\gamma)^l
\end{equation}
On the other hand, 
\begin{align}
&\text{the coefficient of $p^k (\beta+1-\gamma)^l$ in the left hand side of \eqref{connection01_11}} \notag \\
&=\sum_{m} G_0(k+l+1,l,m;z) - \sum_{m} G_0(k+l+1,l+1,m;z) \notag \\
&+ \sum_{m} G_0(k+l+1,k+1,m;1-z) - \sum_{m} G_0(k+l+1,k,m;1-z) \notag \\
& \qquad + \sum_{\substack{k'+k''=k+1\\l'+l''=l}} \sum_{m'}G_1(k'+l'-1,l',m';z)G_1(k''+l''+1,k''+1,m'';1-z) \notag \\
& \qquad - \sum_{\substack{k'+k''=k\\l'+l''=l+1}} \sum_{m'}G_1(k'+l'-1,l',m';z)G_1(k''+l''+1,k''+1,m'';1-z).
\end{align}
Therefore, we obtain
\begin{align}
&\sum_{s}G_0(k,n,s;z)+\sum_{s}G_0(k,k-n,s;1-z) \notag \\
&\qquad \qquad +\!\!\sum_{\substack{k'+k''=k\\n'+n''=n}}\!\!\sum_{s'}G_1(k',n',s';z)\sum_{s''}G_1(k'',k''-n'',s'';1-z) \notag \\
&=\sum_{s}G_0(k,n-1,s;z)+\sum_{s}G_0(k,k-(n-1),s;1-z)\notag\\
&\qquad +\!\!\!\!\sum_{\substack{k'+k''=k\\n'+n''=n-1}}\!\!\sum_{s'}G_1(k',n',s';z)\sum_{s''}G_1(k'',k''-n'',s'';1-z) \notag \\*
&\hspace{8.5cm} (k>n, n>1) \\
\intertext{and}
&\sum_{s}G_0(k,1,s;z)+\sum_{s}G_0(k,k-1,s;1-z)\notag\\
&\qquad +\sum_{\substack{k'+k''=k\\n'+n''=1}}\sum_{s'}G_1(k',n',s';z)\sum_{s''}G_1(k'',k''-n'',s'';1-z)=\zeta(k).
\end{align}
We have thus proved the proposition.
\end{proof}

\section{Functional relations derived from the connection relations between irregular solutions}

In this section, we consider functional relations of the multiple polylogarithms of one variable derived from the connection relations between irregular solutions to the hypergeometric equation. The construction of solutions shown in \S \ref{sec:show_HGF_by_MPL} and \S \ref{sec:connection_0_1_regular} can be applied whether the solution is regular or not. Then we can obtain the functional relations with respect to the $(1,2)$, $(2,1)$ and $(2,2)$-element of the connection relation \eqref{connection01} between $z=0$ and $z=1$ and the connection relation \eqref{connection0infty} between $z=0$ and $z=\infty$ in a similar way as above. But, in general, these relations are too complicated to be described explicitly.

In what follows, we give the functional relations derived from the limit of the $(1,1)$-element of \eqref{connection0infty} as $\beta$ tends to $0$. The results include Euler's inversion formula between $z=0$ and $\infty$ and the zeta values at even positive integers in the limit as $z \to 1$.

\subsection{The fundamental solution matrix in the neighborhood of $z=\infty$}

Let $u=\frac{1}{z}$ be a complex coordinate of $z=\infty$ and $\rho_{\infty}: \fH' \to M(2,\C)$ be a representation defined by
\begin{equation}
\rho_{\infty}(X)=\rho_0(Y)-\rho_0(X)=\begin{pmatrix}0&\beta\\\alpha&\alpha+\beta\end{pmatrix},\;\; \rho_{\infty}(Y)=\rho_0(Y)=\begin{pmatrix}0&0\\\alpha&q\end{pmatrix}. \label{rhoinftydef}
\end{equation}
One can rewrite the hypergeometric equation \eqref{HGeq3} as
\begin{equation}
\frac{d}{du}G=\left(\frac{\rho_{\infty}(X)}{u}+\frac{\rho_{\infty}(Y)}{1-u}\right)G, \label{HGeq_infty}
\end{equation}
and get the fundamental solution matrix $\rho_{\infty}(H_0(u))$. Comparing the asymptotic properties, we have
\begin{equation}
\Phi_{\infty}=\rho_{\infty}(H_0(u)) \begin{pmatrix}1&1\\-\frac{\alpha}{\beta}&-1\end{pmatrix}.
\end{equation}
Then we obtain
\begin{align}
&\Phi_{\infty}^{-1}=\frac{\beta}{\beta-\alpha}\begin{pmatrix}1&1\\-\frac{\alpha}{\beta}&-1\end{pmatrix}\rho_{\infty}(H_0(u))^{-1}, \notag \\
&\rho_{\infty}(H_0(u))^{-1}=\sum_{w}\Li(S(w);u)\rho_{\infty}(W). \label{sol_infty}
\end{align}

In \eqref{sol_infty}, it is difficult to calculate $\rho_{\infty}(W)$ concretely, but one can see the limit of $\rho_{\infty}(W)$ and $\rho_{\infty}(H_0(u))^{-1}$ as $\beta \to 0$ as follows.

\begin{lem}
For any word $W$ in $\fH$, we have
\begin{equation}
\lim_{\beta \to 0} \rho_{\infty}(W) = \begin{cases}\alpha^{|W|-d(w)}(\alpha+p)^{d(W)-1} \begin{pmatrix}0&0\\\alpha&\alpha+p\end{pmatrix} & (W \in \fH Y)\\ \alpha^{|W|-d(w)}(\alpha+p)^{d(W)-1} \begin{pmatrix}0&0\\\alpha+p&\alpha+p\end{pmatrix} & (W \in \fH X).\end{cases}
\end{equation}
\end{lem}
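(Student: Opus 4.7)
The plan is a direct induction on $|W|$, exploiting the rank-one structure of the two limiting matrices.

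Using $q=\alpha+\beta+p$, I would first compute
\begin{equation*}
A:=\lim_{\beta\to 0}\rho_\infty(X)=\begin{pmatrix}0&0\\\alpha&\alpha\end{pmatrix},\qquad B:=\lim_{\beta\to 0}\rho_\infty(Y)=\begin{pmatrix}0&0\\\alpha&\alpha+p\end{pmatrix}.
\end{equation*}
Both matrices have zero first row, so every partial product of them does too. The key mechanical observation, which follows from a two-line direct check, is the following: for any $2\times 2$ matrix $M$ whose first row vanishes, left-multiplication by $A$ scales the second row of $M$ by $\alpha$, while left-multiplication by $B$ scales it by $\alpha+p$.

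Writing $W=a_1 a_2\cdots a_n$ with each $a_i\in\{X,Y\}$, I would then iterate this observation. Starting from the \emph{base} matrix $\rho_\infty(a_n)|_{\beta=0}$, which equals $B$ if $a_n=Y$ and $A$ if $a_n=X$, successive left-multiplications by $\rho_\infty(a_{n-1})|_{\beta=0},\ldots,\rho_\infty(a_1)|_{\beta=0}$ contribute a scalar factor of $\alpha$ for each prefix letter $X$ and of $\alpha+p$ for each prefix letter $Y$. Counting the letters in the prefix $a_1,\ldots,a_{n-1}$: if $a_n=Y$, the prefix has $|W|-d(W)$ copies of $X$ and $d(W)-1$ copies of $Y$, yielding
\begin{equation*}
\rho_\infty(W)|_{\beta=0}=\alpha^{|W|-d(W)}(\alpha+p)^{d(W)-1}B,
\end{equation*}
which is exactly the first case of the lemma. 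If $a_n=X$, the prefix has $|W|-d(W)-1$ copies of $X$ and $d(W)$ copies of $Y$, yielding $\alpha^{|W|-d(W)-1}(\alpha+p)^{d(W)}A$; absorbing one factor of $\alpha$ into $A=\alpha\bigl(\begin{smallmatrix}0&0\\1&1\end{smallmatrix}\bigr)$ and then redistributing a factor of $\alpha+p$ recovers $\alpha^{|W|-d(W)}(\alpha+p)^{d(W)-1}\bigl(\begin{smallmatrix}0&0\\\alpha+p&\alpha+p\end{smallmatrix}\bigr)$, the second case.

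Formally this is a routine induction on $n=|W|$ using the second-row scaling rule as the inductive step. I do not anticipate any real obstacle beyond careful bookkeeping of the $X$-count and $Y$-count in the prefix; the apparent asymmetry between the two cases is purely an artifact of the normalization of the matrix factor displayed on the right-hand side.
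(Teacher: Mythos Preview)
Your argument is correct. The paper states this lemma without proof (the analogous Lemma~\ref{rho0calc} is dismissed as ``straightforward computation''), and your rank-one observation---that $A$ and $B$ both kill the first row, so left-multiplication by either simply scales the second row by $\alpha$ or $\alpha+p$ respectively---is exactly the clean way to carry out that computation. The induction and the letter-counting are right, and your final algebraic rewriting in the $a_n=X$ case correctly matches the (slightly awkwardly normalized) form in the statement.
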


\begin{prop}
The following formula holds:
\begin{equation}
\lim_{\beta \to 0} \rho_{\infty}(H_0(u))^{-1}=\begin{pmatrix}1&0\\H_{21}& H_{22}\end{pmatrix},
\end{equation}
where
\begin{align}
&H_{21}= \sum_{k=1}^{\infty}(-1)^k \frac{\log^k u}{k!} \alpha^k +\sum_{k\ge n \ge 1} \Li_{\underbrace{\scriptstyle 1,\ldots,1}_{n \text{times}}}(u)\frac{\log^{k-n}u}{(k-n)!}\alpha^{k-n}(\alpha+p)^n \notag \\
&\quad +p \sum_{k>n\ge 1}(-1)^k\!\!\sum_{i=0}^{k-n-1}(-1)^{k-n-1-i} \!\!\Li_{k-n-i+1,\underbrace{\scriptstyle 1,\ldots,1}_{n-1 \text{times}}}(u) \frac{\log^i(u)}{i!}\alpha^{k-n}(\alpha+p)^n, \\
&H_{22}=\displaystyle \sum_{k\ge n \ge 0}(-1)^k \Li_{\underbrace{\scriptstyle 1,\ldots,1}_{n \text{times}}}(u)\frac{\log^{k-n}u}{(k-n)!}\alpha^{k-n}(\alpha+p)^n.
\end{align}
\end{prop}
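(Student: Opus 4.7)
My plan is to pass to the limit $\beta \to 0$ inside the series expression $\rho_\infty(H_0(u))^{-1} = \sum_w \Li(S(w);u)\,\rho_\infty(W)$ from \eqref{sol_infty}, substituting the limits of $\rho_\infty(W)$ provided by the preceding lemma. Since every non-empty word $W$ has a vanishing top row in $\lim_{\beta\to 0}\rho_\infty(W)$, only the empty word contributes to the first row of the limit (through the identity matrix), which immediately gives the top row $(1,0)$.

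For the $(2,2)$-entry, the lemma shows that $(\lim_{\beta\to 0}\rho_\infty(W))_{22}$ equals $\alpha^{|W|-d(W)}(\alpha+p)^{d(W)}$ regardless of whether $W$ ends in $X$ or $Y$. Using $S(W) = (-1)^{|W|}\,\mathrm{rev}(W)$ and switching the summation variable to $V = \mathrm{rev}(W)$ (a bijection preserving weight and depth), I would group by $(k,n) = (|V|,d(V))$. The combinatorial identity $\sum_{|V|=k,\,d(V)=n} V = y^n \sh x^{k-n}$ together with the shuffle-homomorphism property of $\Li(\cdot;u)$ factorises the inner sum as $\Li(y^n;u)\,\Li(x^{k-n};u) = \Li_{\underbrace{1,\ldots,1}_n}(u)\cdot\log^{k-n}(u)/(k-n)!$. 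Combining with the sign $(-1)^{|V|}$ and the parameter weights reproduces the claimed series for $H_{22}$.

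For $H_{21}$, the lemma assigns different $(2,1)$-entries according to whether $W$ ends in $Y$ or $X$; after reversal, these become sums over $V$ starting with $y$ or $x$ respectively. Each inner sum is a concatenation of the initial letter with a shuffle product, which by the integral characterisations $\Li(yw;u)=\int_0^u\Li(w;t)\,dt/(1-t)$ and $\Li(xw;u)=\int_0^u\Li(w;t)\,dt/t$ converts into an iterated integral: either of $\Li_{\underbrace{1,\ldots,1}_{n-1}}(t)\log^{k-n}(t)/(k-n)!$ against $dt/(1-t)$, or of $\Li_{\underbrace{1,\ldots,1}_n}(t)\log^{k-1-n}(t)/(k-1-n)!$ against $dt/t$. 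The depth-zero contribution, coming from $V = x^k$, evaluates cleanly to $\log^k(u)/k!$ and reproduces the first sum of the claim.

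The hard part will be reducing the remaining iterated integrals to the explicit form $\Li_{k-n-i+1,\underbrace{1,\ldots,1}_{n-1}}(u)\,\log^i(u)/i!$ featured in the third sum. To this end I would decompose the coefficient $\alpha^{|W|-d(W)+1}(\alpha+p)^{d(W)-1}$ arising from $W$ ending in $Y$ via $\alpha = (\alpha+p) - p$, splitting it into an $\alpha^{|W|-d(W)}(\alpha+p)^{d(W)}$ piece (which combines with the $W$-ends-in-$X$ contribution to rebuild the second sum, via the shuffle identity again) and a $p$-dependent piece that becomes the third sum. The integral $\int_0^u \Li_{\underbrace{1,\ldots,1}_{n-1}}(t)\log^{k-n}(t)/(k-n)!\cdot dt/(1-t)$ is then reduced to the explicit alternating finite sum by integration by parts, each step trading a factor of $\log t$ in the integrand for an increase in the leading index of the resulting MPL; iterating this $(k-n)$ times produces the inner sum $\sum_{i=0}^{k-n-1}(-1)^{k-n-1-i}\Li_{k-n-i+1,\underbrace{1,\ldots,1}_{n-1}}(u)\log^i(u)/i!$. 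This iterative integration-by-parts step, with its alternating signs arising from the vanishing boundary terms at $t\to 0$, is the most delicate bookkeeping in the argument.
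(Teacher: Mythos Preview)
The paper states this proposition without proof, so there is no argument to compare yours against; your plan is exactly the natural computation one would carry out, starting from \eqref{sol_infty} and the preceding lemma. Your treatment of the first row and of $H_{22}$ via the shuffle identity $\sum_{|V|=k,\,d(V)=n}V=y^{n}\sh x^{k-n}$ is correct, and the integration-by-parts scheme you describe for the $p$-piece indeed terminates and gives the alternating sum of $\Li_{j+1,1^{n-1}}(u)\log^{\,\cdot}(u)$ terms.

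One point of caution. Carrying out your decomposition $\alpha=(\alpha+p)-p$ literally gives
\[
H_{21}=(H_{22}-1)\;-\;p\!\!\sum_{k\ge n\ge 1}(-1)^{k}\alpha^{k-n}(\alpha+p)^{\,n-1}\sum_{i=0}^{k-n}(-1)^{k-n-i}\Li_{k-n-i+1,\,1^{n-1}}(u)\,\frac{\log^{i}u}{i!},
\]
so the ``second sum'' acquires the factor $(-1)^{k}$, the third sum carries $(\alpha+p)^{n-1}$ rather than $(\alpha+p)^{n}$, and the inner $i$-sum runs up to $k-n$ (the top term $i=k-n$ being $\Li_{1^{n}}$). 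These do not agree on the nose with the displayed $H_{21}$ in the proposition; for instance, at total degree~$1$ in $(\alpha,p)$ the words $W=X,Y$ give $H_{21}=-\alpha(\log u+\Li_{1}(u))$, which matches your expression but not the proposition's formula as printed (the latter yields $-\alpha\log u+(\alpha+p)\Li_{1}(u)$). This discrepancy is almost certainly a typographical slip in the stated $H_{21}$ rather than a flaw in your method: the subsequent applications in \S5.2 only use $H_{21}$ and $H_{22}$ through the combination entering $(\Phi_\infty^{-1})_{11},(\Phi_\infty^{-1})_{12}$, and your derivation provides that combination correctly. So proceed with your argument, but record the formula you actually obtain rather than trying to force it to match the printed exponents and ranges.
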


\subsection{The functional relations derived from the $(1,1)$-element of the connection relation between $z=0$ and $\infty$}

Multiplying both sides of the $(1,1)$-elements of the connection relation \eqref{connection0infty}
\begin{equation}
(\Phi_{\infty}^{-1})_{11}(\Phi_0)_{11} + (\Phi_{\infty}^{-1})_{12}(\Phi_0)_{21} = (C^{0\infty})_{11} \label{connection0infty_11}
\end{equation}
by $\frac{\beta-\alpha}{\beta}$ and then taking the limit as $\beta \to 0$, we obtain the following relations as the coefficients of $\alpha^m (\alpha+1-\gamma)^n$.

\begin{prop} \label{MPL0infty}
For any positive integers $m,n$, and $z \in \fU$, we have
\begin{align}
&\frac{\log^m \frac{1}{z}}{m!}- \sum_{i=0}^{m-1}\left(\Li_{m-i}(z)+(-1)^{m-i}\Li_{m-i}(\frac{1}{z})\right)\frac{\log^i \frac{1}{z}}{i!} = (-1)^m B_m \frac{(2 \pi i)^m}{m!}, \label{rel0infty_1}\\
&\sum^{m-1}_{i=0}(-1)^{n+i+1}\Li_{m-i,\underbrace{\scriptstyle 1,\ldots,1}_{n \text{times}}}(\frac{1}{z})\frac{\log^i \frac{1}{z}}{i!} +\sum_{i=0}^{m-1}(-1)^{n+i+1}\Li_{m-i+1,\underbrace{\scriptstyle 1,\ldots,1}_{n-1 \text{times}}}(\frac{1}{z}) \frac{\log^i \frac{1}{z}}{i!}\notag \\*
&\qquad +\sum_{i=0}^{m-1}\sum_{j=0}^n (-1)^{m+n-j-1}\Li_{\underbrace{\scriptstyle 1,\ldots,1}_{n-j \text{times}}}(\frac{1}{z})\sum_{k=0}^j \binom{m-i-1+j-k}{m-i-1}\notag \\*
&\hspace{6cm} \times \sum_sG_1(m-i+j,k+1,s;z)\frac{\log^{i}\frac{1}{z}}{i!} \notag \\
&\qquad = \sum_{\substack{m_1+m_2+m_3=m\\n_1+n_2=n}}\binom{m_1+n_1}{m_1}(-1)^{m_1}P_{m_1+n_1}(\bzeta)P_{m_2}(\bzeta)\frac{(-\pi i)^{m_3}}{m_3!}P_{n_2}(-\bzeta), \label{rel0infty_2}
\end{align}
where $B_m$ are the Bernoulli numbers, namely the real numbers introduced through the generating function $\sum_m B_m \frac{t^m}{m!}=\frac{te^t}{e^t-1}$.
\end{prop}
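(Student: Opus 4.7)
The plan is to extract coefficients of $\alpha^{m}(\alpha+1-\gamma)^{n}$ --- write $Q:=\alpha+1-\gamma=\alpha+p$ for brevity --- from the identity obtained by multiplying the $(1,1)$-element of the connection relation \eqref{connection0infty_11} by $(\beta-\alpha)/\beta$ and passing to the limit $\beta\to 0$. Reading off the $Q^{0}$ coefficient will give \eqref{rel0infty_1}, while the $Q^{n}$ coefficient for $n\ge 1$ will give \eqref{rel0infty_2}.

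For the left-hand side, I combine the formula $\Phi_{\infty}^{-1}=\tfrac{\beta}{\beta-\alpha}\bigl(\begin{smallmatrix}1 & 1\\ -\alpha/\beta & -1\end{smallmatrix}\bigr)\rho_{\infty}(H_{0}(u))^{-1}$ with Lemma \ref{lem:Phi0_rho0(H0)} to write
\[
\tfrac{\beta-\alpha}{\beta}(\Phi_{\infty}^{-1}\Phi_{0})_{11}=\bigl[(\rho_{\infty}(H_{0}(u))^{-1})_{11}+(\rho_{\infty}(H_{0}(u))^{-1})_{21}\bigr](\Phi_{0})_{11}+\bigl[(\rho_{\infty}(H_{0}(u))^{-1})_{12}+(\rho_{\infty}(H_{0}(u))^{-1})_{22}\bigr](\Phi_{0})_{21}.
\]
The preceding proposition then reduces this in the limit $\beta\to 0$ to $(1+H_{21})(\Phi_{0})_{11}\big|_{\beta=0}+H_{22}(\Phi_{0})_{21}\big|_{\beta=0}$. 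Theorem \ref{thm:MainTheorem1} gives $(\Phi_{0})_{11}|_{\beta=0}=1$, while from $(\Phi_{0})_{21}=\tfrac{1}{\beta}z\tfrac{d}{dz}\varphi^{(0)}_{0}$ together with $z\tfrac{d}{dz}G_{0}(k,n,s;z)=G_{1}(k-1,n,s;z)$, and the substitutions $q\to Q$ and $r\to Qp$, one obtains $(\Phi_{0})_{21}|_{\beta=0}=\alpha\sum_{K\ge n\ge s\ge 1}G_{1}(K,n,s;z)\,p^{K-n}Q^{n-1}$ --- an explicit series in $\alpha$ and $p$.

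For the right-hand side, use $\tfrac{\beta-\alpha}{\beta}\cdot\tfrac{\Gamma(\beta-\alpha)}{\Gamma(\beta)}=\tfrac{\Gamma(\beta-\alpha+1)}{\Gamma(\beta+1)}$ to find $\lim_{\beta\to 0}\tfrac{\beta-\alpha}{\beta}(C^{0\infty})_{11}=e^{-\pi i\alpha}\tfrac{\Gamma(1-p)\Gamma(1-\alpha)}{\Gamma(1-Q)}$. Expanding each gamma factor by \eqref{Gamma_expand} --- the Euler-constant exponentials cancel identically because $p+\alpha-Q=0$ --- yields $e^{-\pi i\alpha}\bigl(\sum_{m}P_{m}(\bzeta)p^{m}\bigr)\bigl(\sum_{k}P_{k}(\bzeta)\alpha^{k}\bigr)\bigl(\sum_{n}P_{n}(-\bzeta)Q^{n}\bigr)$. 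Re-expressing $p^{n}$ via the binomial identity $p^{n}=\sum_{i}\binom{n}{i}(-\alpha)^{n-i}Q^{i}$ converts the series into one in $\alpha$ and $Q$ and produces the factor $\binom{m_{1}+n_{1}}{m_{1}}(-1)^{m_{1}}$ visible on the RHS of \eqref{rel0infty_2}. For $Q^{0}$, the gamma quotient collapses via the reflection formula to $e^{-\pi i\alpha}\pi\alpha/\sin(\pi\alpha)=2\pi i\alpha/(e^{2\pi i\alpha}-1)$, whose Taylor coefficients are $B_{m}(2\pi i)^{m}/m!$, giving \eqref{rel0infty_1}.

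The main obstacle is the combinatorial bookkeeping for \eqref{rel0infty_2}: the three summands of $H_{21}$ together with the product $H_{22}\cdot(\Phi_{0})_{21}|_{\beta=0}$ each generate distinct pieces involving $\log^{i}u$, single-index polylogarithms $\Li_{k}(u)$ with $u=1/z$, and the depth-fixed sums $\sum_{s}G_{1}(\cdot,\cdot,s;z)$. Tracking the signs $(-1)^{k}$, $(-1)^{k-n-1-i}$ together with the alternating binomial coefficients arising from $p^{K-n}=\sum_{i}\binom{K-n}{i}(-\alpha)^{K-n-i}Q^{i}$ across all pieces, and matching them term-by-term to the Schur-polynomial structure of the RHS, is the principal technical challenge. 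The $n=0$ case is considerably cleaner since only the first sum of $H_{21}$ and the $n=s=1$ slice of the $(\Phi_{0})_{21}$ expansion, where $G_{1}(K,1,1;z)=\Li_{K}(z)$, survive.
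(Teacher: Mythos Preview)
Your proposal follows exactly the route the paper indicates: the paper's own ``proof'' of this proposition is the single sentence preceding it --- multiply the $(1,1)$-entry of \eqref{connection0infty} by $\tfrac{\beta-\alpha}{\beta}$, let $\beta\to 0$, and read off the coefficients of $\alpha^{m}(\alpha+1-\gamma)^{n}$ --- and you have fleshed out precisely these steps (the gamma–reflection computation for the right-hand side, the use of Lemma~\ref{lem:Phi0_rho0(H0)} and the preceding proposition for the left). Your identification of the combinatorial bookkeeping as the main labor is accurate; one small caution is that in the $Q^{0}$ extraction you will also need the depth-one polylogarithms $\Li_{k}(1/z)$ appearing in \eqref{rel0infty_1}, which arise from the third summand of $H_{21}$ (after the substitution $p=Q-\alpha$) rather than only from the two pieces you name, so double-check that accounting when you carry out the details.
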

Especially, if $m$ is an even positive integer, we have
\begin{equation}
-2 \zeta(m)=B_m \frac{(2 \pi i)^m}{m!} \label{ZVeven}
\end{equation}
as the limit as $z$ tends to $1$ in \eqref{rel0infty_1}. Furthermore, taking the limit in \eqref{rel0infty_2} in the case of $n=1$ and $2$, we obtain the following proposition.

\begin{cor} \label{MZV0infty}
The following relations among the MZVs holds.
\begin{align}
(m+2)\zeta(m+1)&=2 \sum_{\substack{i+2k=m,\\i,k\ge 1}}\zeta(i+1)\zeta(2k) \hspace{3cm} (m:\text{odd}), \label{MZV0infty_n1odd}\\
2\zeta(m,1)&=m\zeta(m+1)-2\sum_{\substack{i+2k=m,\\i,k\ge 1}}\zeta(i+1)\zeta(2k)\hspace{1cm} (m:\text{even}), \label{MZV0infty_n1even}\\
(m+2)\zeta(m+1)&=-\frac{\pi^2}{2}\zeta(m)+\frac{(m+2)(m+1)}{2}\zeta(m+2) \notag \\
&\qquad -\sum_{\substack{i+2k=m,\\i,k\ge 1}}(i+1)\zeta(i+2)\zeta(2k) \notag \\
&\qquad -\sum_{\substack{i+j+2k=m,\\i,j,k\ge 1}}(i+1)\zeta(i+1)\zeta(j+1)\zeta(2k) \quad  (m:\text{odd}), \label{MZV0infty_n2odd}\\
2\zeta(m,1,1)&=-m\zeta(m+1,1)+\frac{\pi^2}{2}\zeta(m)-\frac{(m+2)(m+1)}{2}\zeta(m+2) \notag \\
&\qquad +\sum_{\substack{i+2k=m,\\i,k\ge 1}}(i+1)\zeta(i+2)\zeta(2k) \notag \\
&\qquad +\sum_{\substack{i+j+2k=m,\\i,j,k\ge 1}}(i+1)\zeta(i+1)\zeta(j+1)\zeta(2k) \quad (m:\text{even}). \label{MZV0infty_n2even}
\end{align}
\end{cor}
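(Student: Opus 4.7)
The plan is to specialize Proposition \ref{MPL0infty}, specifically equation \eqref{rel0infty_2}, to the two cases $n=1$ and $n=2$, and then pass to the limit $z\to 1$. Since $\log(1/z)\to 0$ in this limit, every term carrying a factor $\log^i(1/z)/i!$ with $i\ge 1$ drops out, so after the limit only the $i=0$ contributions from the two outer sums and the third double sum remain. The finite pieces $\Li_{m,\underbrace{\scriptstyle 1,\ldots,1}_{n}}(1/z)$, $\Li_{m+1,\underbrace{\scriptstyle 1,\ldots,1}_{n-1}}(1/z)$ and the $G_1(\ldots;z)$ factors then converge to genuine MZVs, yielding a linear relation among $\zeta(m+1)$, $\zeta(m,1)$, $\zeta(m,1,1)$ and products of lower-weight zeta values. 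On the right-hand side, the Schur-polynomial expression contracts through the identity $(2\pi i)^{2k}B_{2k}/(2k)!=-2\zeta(2k)$ (obtained in \eqref{ZVeven} from \eqref{rel0infty_1}), converting the $(-\pi i)^{m_3}$ factors into explicit $\zeta(2k)$'s.

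For $n=1$ I would carry out the splitting by parity of $m$. When $m$ is odd, the Bernoulli contribution vanishes and the classical $\zeta(m,1)$ and $\Li_{m,1}(1)$ combine with the $G_1$ sums involving the sum formula from Proposition \ref{prop:MPL_sum_formula} in the limit, leaving only relations between $\zeta(m+1)$ and the convolution $\sum \zeta(i+1)\zeta(2k)$, which is exactly \eqref{MZV0infty_n1odd}. When $m$ is even, the $(-\pi i)^{m_3}$ factors with odd $m_3$ drop out (by antisymmetry of the corresponding expansion), and $2\zeta(m,1)$ survives on the left, producing \eqref{MZV0infty_n1even}. The identity \eqref{Euler'sZetaRelation} derived in \S\ref{subsec:connection_0_1_regular:examples} is a useful consistency check.

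For $n=2$ the same recipe applies: identify the coefficient of $\alpha(\alpha+1-\gamma)^2$ in the limit of \eqref{connection0infty_11}, read off the surviving MZV pieces $\zeta(m,1,1)$, $\zeta(m+1,1)$, $\zeta(m+2)$ together with the $G_1(\ldots;z)$ contributions which tend to height-summed MZVs via Proposition \ref{prop:MPL_sum_formula}, and split by parity of $m$. The $(-\pi i)^2/2! = -\pi^2/2$ contribution on the right-hand side of \eqref{rel0infty_2} is precisely the source of the $-\frac{\pi^2}{2}\zeta(m)$ term in \eqref{MZV0infty_n2odd} and the $+\frac{\pi^2}{2}\zeta(m)$ term in \eqref{MZV0infty_n2even}, and the binomial weights $(i+1)$ in the right-hand sides come from the binomial coefficient $\binom{m_1+n_1}{m_1}$ with $n_1$ up to $2$.

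The main obstacle will be the careful bookkeeping of the inner triple sum $\sum_{i,j,k,s} \binom{m-i-1+j-k}{m-i-1}G_1(m-i+j,k+1,s;z)$: its $z\to 1$ limit has to be collapsed onto $\zeta$-values using the sum formula (Granville--Zagier) that appears as a corollary of Proposition \ref{prop:MPL_sum_formula}, and one must check that all would-be divergences coming from $\Li_{\underbrace{\scriptstyle 1,\ldots,1}_{n-j}}(1/z)$ together with the corresponding $G_1$-pieces of depth $>$ weight/2 cancel exactly, so that only convergent MZVs remain. Once this cancellation is established, matching the coefficient of $\alpha^m(\alpha+1-\gamma)^n$ on both sides of \eqref{connection0infty_11} and separating by parity of $m$ yields the four stated identities directly.
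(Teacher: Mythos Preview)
Your overall strategy---specialize \eqref{rel0infty_2} to $n=1,2$ and let $z\to 1$---matches the paper's. But the mechanism you propose for handling the divergent factors $\Li_{\underbrace{\scriptstyle 1,\ldots,1}_{n-j}}(1/z)$ is not the one that actually works, and you are missing the crucial step that produces the parity split.

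The divergent pieces do \emph{not} simply cancel against $G_1$-terms, and the sum formula (Proposition~\ref{prop:MPL_sum_formula}) is not the right tool here. What the paper uses instead is the explicit analytic continuation on the interval $\fI$,
\[
\Li_{\underbrace{\scriptstyle 1,\ldots,1}_{n}}\!\Big(\tfrac{1}{z}\Big)=\frac{1}{n!}\big(\Li_1(z)+\log z+\pi i\big)^n,
\]
together with the shuffle homomorphism $\Li(w_1;z)\Li(w_2;z)=\Li(w_1\sh w_2;z)$ to merge the resulting $\Li_1(z)$ factors with the $G_1(\ldots;z)$ pieces into convergent MPLs. The point you overlook is that this continuation injects powers of $\pi i$ into the \emph{left}-hand side, not just the right. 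For instance at $n=1$ the limit of the left side is
\[
(1+(-1)^m m)\zeta(m+1)+(1+(-1)^m)\zeta(m+1)+(-1)^m\zeta(m)\pi i,
\]
with an honest imaginary contribution.

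The four identities \eqref{MZV0infty_n1odd}--\eqref{MZV0infty_n2even} then arise by taking \emph{real parts} of both sides; the split by parity of $m$ comes from the signs $(-1)^m$ in the left-hand limit, not from any Bernoulli or antisymmetry argument on the right. The imaginary parts are not discarded: for $n=1$ they give a tautology, and for $n=2$ they reproduce Euler's relation \eqref{Euler'sZetaRelation}, serving as a check. Your plan to ``split by parity of $m$'' via vanishing Bernoulli numbers or antisymmetry of $(-\pi i)^{m_3}$ on the right does not account for the $\pi i$ contributions that appear on the left, and without the real/imaginary separation you cannot isolate the stated identities.
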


\begin{proof}[Proof of Proposition \ref{MZV0infty}]
From the definition of MPLs and choice of the branches of them on the interval $\fI$ defined in \S \ref{sec:Pre:Anal_MPL}, we have the following analytic continuation;
\begin{align}
&\Li_{\underbrace{\scriptstyle 1,\ldots,1}_{n \text{times}}}(\frac{1}{z})=\frac{1}{n!}(\Li_1(z)+\log z+\pi i)^n, \qquad (z \in \fI)
\intertext{and}
&\lim_{z\to 1 \text{ on} \fI}\Li_{k_1,k_2,\ldots,k_n}(\frac{1}{z})=\lim_{z\to 1 \text{ on} \fI}\Li_{k_1,k_2,\ldots,k_n}(z)=\zeta(k_1,k_2,\ldots,k_n). \qquad (k_1 \ge 2)
\end{align}

Here we note that $\Li(w,z)\log z \to 0$ as $z \to 1$ on $\fI$ for any word $w$ in $\fh$. By virtue of the analytic continuation above and the algebraic homomorphism $\Li(w_1;z)\Li(w_2;z)=\Li(w_1 \sh w_2;z)$, we obtain
\begin{align*}
&(\text{the left hand side of \eqref{rel0infty_2} at $n=1$}) \\*
&\quad \underset{z\to 1}\to (1+(-1)^m m)\zeta(m+1)+(1+(-1)^m)\zeta(m+1)+(-1)^m\zeta(m)\pi i,\\
&(\text{the left hand side of \eqref{rel0infty_2} at $n=2$}) \\*
&\quad \underset{z\to 1}\to ((-1)^{m+1}-1)\zeta(m,1,1)+((-1)^m m-1)\zeta(m+1,1)\\*
&\quad\qquad +(-1)^m\frac{\pi^2}{2}\zeta(m)+(-1)^{m+1}\frac{m(m+1)}{2}\zeta(m+2)\\*
&\quad\qquad +(-1)^{m+1}\zeta(m,1)\pi i+(-1)^m m \zeta(m+1)\pi i.
\intertext{On the other hand, differentiating the $(1,1)$-element of $C^{0\infty}$ by $(\alpha+1-\gamma)$, taking the limit as $(\alpha+1-\gamma) \to 0$ and applying the equation \eqref{ZVeven}, we have}
&(\text{the right hand side of \eqref{rel0infty_2} in $n=1$}) \\*
&\quad= (-1)^m\zeta(m+1) -2\sum_{\substack{i+2k=m\\i,k\ge 1}}(-1)^i \zeta(i+1)\zeta(2k) +(-1)^m \zeta(m) \pi i ,\\
&(\text{the right hand side of \eqref{rel0infty_2} in $n=2$}) \\*
&\quad= (-1)^m (m+1)\zeta(m+2)+(-1)^{m+1} \zeta(m+1,1)\\*
&\quad\qquad -\sum_{\substack{i+2k=m\\i,k\ge 1}}(-1)^i (i+1)\zeta(i+2)\zeta(2k)\\*
&\quad\qquad -\sum_{\substack{i+j+2k=m\\i,j,k\ge 1}}(-1)^{i+j} \zeta(i+1)\zeta(j+1)\zeta(2k)\\*
&\quad\qquad +(-1)^m \frac{1}{2}\Big(\sum_{i=1}^{m-2}\zeta(i+1)\zeta(m-i)+m\zeta(m+1)\Big)\pi i.
\end{align*}
The real parts of these results yield the equations from \eqref{MZV0infty_n1odd} to \eqref{MZV0infty_n2even}. The imaginary parts of them are nothing but the identity for $n=1$ and for $n=2$, the relation \eqref{Euler'sZetaRelation} previously shown.

\end{proof}

According to \cite{BBBL}, the generating function of the MZVs of $\zeta(m,1,\ldots,1)$ type can be expressed as a ratio of gamma functions. In this context, Proposition \ref{MZV0infty} partially gives the concrete expressions, and Proposition \ref{MPL0infty} is regarded as a generalization of this claim in \cite{BBBL} to the MPLs.

\section{Acknowledgments}

The author express his deep gratitude to Professor Yasuo Ohno for many valuable comments. He spoke about the contents of this article at Professor Yumiko Hironaka's seminar, Waseda University, May 16, 2008. He also thanks Professor Kimio Ueno, his academic supervisor, Dr. Jun-ichi Okuda and members of Ueno's laboratory for useful advice and discussions.

{\noindent
Shu Oi\\
Major in Mathematical Science\\
Graduate School of Science and Engineering\\
Waseda University\\
3-4-1, Okubo Shinjuku-ku\\
Tokyo 169-8555, Japan\\\\
{\tt shu\_oi@toki.waseda.jp}\\
}

\end{document}